\numberwithin{equation}{section}
\newtheorem{thm}{Theorem}[section]
\newtheorem{pro}[thm]{Proposition}
\newtheorem{lem}[thm]{Lemma}
\newtheorem{cor}[thm]{Corollary}
\theoremstyle{definition}
\theoremstyle{remark}
\begin{document}

\title[Locally $n$-connected compacta and $UV^n$-maps]
{Locally $n$-connected compacta and $UV^n$-maps}

\author{V. Valov}
\address{Department of Computer Science and Mathematics,
Nipissing University, 100 College Drive, P.O. Box 5002, North Bay,
ON, P1B 8L7, Canada} \email{veskov@nipissingu.ca}

\date{\today}
\thanks{The author was partially supported by NSERC
Grant 261914-13.}

 \keywords{absolute neighborhood retracts, $ALC^n$-spaces, cell-like maps and spaces, $WLC^n$-spaces, $UV^n$-maps and spaces}

\subjclass[2010]{Primary 54C55, 54D05; Secondary 54E35, 54E40}
\begin{abstract}
We provide a machinery for transferring some properties of metrizable $ANR$-spaces to metrizable $LC^n$-spaces. As a result, we show that for complete metrizable spaces the properties $ALC^n$, $LC^n$ and $WLC^n$ coincide to each other. We also provide the following spectral characterizations of  $ALC^n$ and cell-like compacta:
A compactum $X$ is $ALC^n$ if and only if $X$ is the limit space of a $\sigma$-complete inverse system $\displaystyle S=\{X_\alpha, p^{\beta}_\alpha, \alpha<\beta<\tau\}$ consisting of compact metrizable $LC^n$-spaces $X_\alpha$ such that all bonding projections $p^{\beta}_\alpha$, as a well all limit projections $p_\alpha$, are $UV^n$-maps.\\
A compactum $X$ is a cell-like $($resp., $UV^n$$)$ space if and only if $X$ is the limit space of a $\sigma$-complete inverse system consisting of
cell-like $($resp., $UV^n$$)$ metric compacta.
\end{abstract}
\maketitle\markboth{}{Locally $n$-connected compacta}





\section{Introduction}

Following \cite{dr}, we say that a space $M$ is weakly locally $n$-connected (briefly, $WLC^n$) in a space $Y$ if $M\subset Y$ is closed and for every point $x\in M$ and its open neighborhood $U$ in $M$ there exists a neighborhood $V$ of $x$ in $M$ such that any map from $\mathbb S^k$, $k\leq n$, into $V$ is null-homotopic in $\widetilde U$, where $\widetilde U$ is any open in $Y$ set with $\widetilde U\cap M=U$ (in such a case we say that $\widetilde U$ is an open extension of $U$ in $Y$).
Dranishnikov \cite{dr2} also suggested the following notion: a space $M$ is approximately locally $n$-connected (briefly, $ALC^n$) in a space $Y$ if $M\subset Y$ is closed and for every point $x\in M$ and its open neighborhood $U$ in $M$ there exists a neighborhood $V$ of $x$ in $M$ such that for any open in $Y$ extension $\widetilde U$ of $U$ there exists and open in $Y$ extension $\widetilde V$ of $V$ with any map from $\mathbb S^k$, $k\leq n$, into $\widetilde V$ being null-homotopic in $\widetilde U$. One can show that if $M$ is metrizable (resp., compact), then $M$ is $WLC^n$ in a given space $Y$, where $Y$ is a metrizable (resp., compact) $ANR$, if and only if $M$ is $WLC^n$ in any metrizable (resp., compact) $ANR$ containing $M$ as a closed set. The same is true for the property $ALC^n$. So, the definitions of $WLC^n$ and $ALC^n$ don't depend on the $ANR$-space containing $M$, and we say that $M$ is $WLC^n$ (resp., $ALC^n$).

Dranishnikov \cite{dr} proved that both properties $WLC^n$ and $LC^n$ are identical in the class of metrizable compacta. It also follows from Gutev \cite{gu} and Dugundji-Michael \cite{dm} that this remains true for complete metrizable spaces. One of the main result in this paper, Theorem 2.7, shows that all properties $WLC^n$,  $LC^n$ and $ALC^n$ coincide for completely metrizable spaces. The proof of Theorem 2.7 is based on the technique, developed in Section 2, for transferring properties of metrizable $ANR$'s to $LC^n$-subspaces (in this way well known properties of metrizable $LC^n$-spaces can be obtained from the corresponding properties of metrizable $ANR$'s, see for example Proposition 2.2). Section 2 contains also a characterization of metrizable $LC^n$-spaces whose analogue for $ANR$'s was established by Nhu \cite{nhu}.

It is well known that the class of metrizable $LC^{n}$-spaces are exactly absolute neighborhood extensors for $(n+1)$-dimensional paracompact spaces, and this is not valid for non-metrizable spaces. Outside the class of metrizable spaces we have the following characterization of $ALC^n$ compacta (Theorem 3.1):
A compactum $X$ is $ALC^n$ if and only if $X$ is the limit space of a $\sigma$-complete inverse system $\displaystyle S=\{X_\alpha, p^{\beta}_\alpha, \alpha<\beta<\tau\}$ consisting of compact metrizable $LC^n$-spaces $X_\alpha$ such that all bonding projections $p^{\beta}_\alpha$, as a well all limit projections $p_\alpha$, are $UV^n$-maps. A similar spectral characterization is obtained for cell-like or $UV^n$ compacta, see Theorem 3.3. Both Theorem 3.1 and Theorem 3.3 provide different classes of compacta $\mathcal C$ and corresponding classes of maps $\mathcal M$ adequate with $\mathcal C$ in the following sense (see Shchepin \cite{s76}): A compactum $X$ belongs to $\mathcal C$ if and only if $X$ is the limit space of a $\sigma$-complete inverse system $\displaystyle S=\{X_\alpha, p^{\beta}_\alpha, \alpha<\beta<\tau\}$ consisting of compact metrizable $X_\alpha\in\mathcal C$ with all bonding projections $p^{\beta}_\alpha$ being from $\mathcal M$. For example, according to Theorem 3.1, the class of $ALC^n$ compacta is adequate with the class of $UV^n$-maps.

Recall that a closed subset $A\subset X$ is $UV^n$ in $X$ (resp., cell-like in $X$) if  every neighborhood $U$ of $A$ in $X$ contains a neighborhood $V$ of $A$ such that, for each $0\leq k\leq n$, any map $f\colon\mathbb S^k\to V$ is null-homotopic in $U$ (resp., $A$ is contractible in every neighborhood of $A$ in $X$). A space $X$ is said to be $UV^n$ (resp., cell-like) provided it is $UV^n$ (resp., cell-like) in some $ANR$-space containing $X$ as a closed set.
It is well known that  for metrizable or compact $X$ this definition does not depend on the $ANR$-spaces containing $X$ as a closed set, see for example \cite{sa}. A map $f\colon X\to Y$ between compact spaces is called $UV^n$ (resp., cell-like) if all fibres of $f$ are $UV^n$ (resp., cell-like).

Theorem 3.1 yields that any compact $LC^n$-space is $ALC^n$. It is interesting to find an example of an $ALC^n$-compactum which is not $LC^n$ (obviously, such a compactum should be non-metrizable).

\section{Metrizable $ALC^n$-spaces}

We are going to establish some properties of metric $LC^n$-spaces using the corresponding properties of metric $ANR$-spaces. Recall that a map $f\colon X\to Y$ is {\em $n$-invertible} if for every Tychonoff space $Z$ with $\dim Z\leq n$ and any map $g\colon Z\to Y$ there is a map $h\colon Z\to X$ such that $g=f\circ h$.

The next theorem follows from a stronger result due to Pasynkov \cite[Theorem 6]{pa}, and its proof is based on Dranishnikov results \cite[Theorem 1]{dr1} and
\cite[Theorem 1.2]{dr} (such a result concerning the extension dimension with respect to quasi-finite complexes  was established in \cite[Proposition 2.7]{kv}). We provide here a proof based on factorization theorems.

\begin{thm}
 Every Tychonoff space $M$ is the image of a Tychonoff  space $X$ with $\dim X\leq n$ under a perfect $n$-invertible map. In case $M$ is metrizable, $X$ can be supposed to be also a metrizable space with $w(X)=w(M)$.
\end{thm}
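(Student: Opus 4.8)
\emph{Strategy.} The plan is to reduce the statement to the case of a compactum, to settle that case by transfinite induction on the weight, and, at the inductive step, to build the desired space as the limit of a well-ordered inverse system whose bonding maps are manufactured by pullbacks. Two purely formal facts will be used throughout, each immediate from the lifting definition of $n$-invertibility together with the hypothesis $\dim Z\le n$: a composition of two $n$-invertible maps is $n$-invertible, and the pullback of an $n$-invertible map along an arbitrary map is $n$-invertible; in particular every $n$-invertible map between compacta is surjective (take $Z$ a point).

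\emph{Reduction to compacta.} Suppose the theorem is known when $M$ is a compactum (with $w(X)=w(M)$). For an arbitrary Tychonoff $M$ consider $\beta M$, pick a perfect $n$-invertible $f'\colon X'\to\beta M$ with $\dim X'\le n$, and set $X=(f')^{-1}(M)$, $f=f'|_X$. Then $f$ is perfect: its fibres are fibres of $f'$, and for $C$ closed in $X$ one has $\cl_{X'}C\cap X=C$, whence $f(C)=f'(\cl_{X'}C)\cap M$ is closed in $M$. It is $n$-invertible: a lift through $f'$ of any $g\colon Z\to M$ with $\dim Z\le n$, regarded as a map into $\beta M$, automatically takes values in $(f')^{-1}(M)=X$. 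That $\dim X\le n$ is a standard consequence of the behaviour of covering dimension under perfect maps (recall $M$ is $C^*$-embedded in $\beta M$). If $M$ is metrizable then $X$, a perfect preimage of a metrizable space, is metrizable; its fibres, being closed in $X$, are compact metrizable, so $w(X)\le w(M)\cdot\w=w(M)$, while $w(M)\le w(X)$ because $f$ is a continuous surjection of metric spaces. Hence $w(X)=w(M)$.

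\emph{The compact case.} Argue by induction on $\tau=w(M)$. For $\tau=\w$ the claim is the classical fact that every metrizable compactum is the $n$-invertible image of an at most $n$-dimensional metrizable compactum, which I take from Dranishnikov \cite{dr1,dr} and Pasynkov \cite{pa}; the weight statement is then automatic. Let $\tau>\w$ and write $M=\varprojlim\{M_\alpha,\pi^\beta_\alpha:\alpha<\beta<\lambda\}$ as a continuous well-ordered inverse system of compacta $M_\alpha$ with $w(M_\alpha)<\tau$ and surjective bonding maps (with $\lambda=\tau$ for $\tau$ regular, and $\lambda=\mathrm{cf}\,\tau$ with a continuously increasing choice of weights for $\tau$ singular). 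Construct, by recursion on $\alpha<\lambda$, compacta $X_\alpha$ with $\dim X_\alpha\le n$ and $w(X_\alpha)<\tau$, perfect $n$-invertible maps $f_\alpha\colon X_\alpha\to M_\alpha$, and maps $q^\beta_\alpha$ so that $\{X_\alpha,q^\beta_\alpha\}$ is an inverse system and $\pi^\beta_\alpha f_\beta=f_\alpha q^\beta_\alpha$: at a successor $\alpha=\gamma+1$, form the pullback $W_\alpha=X_\gamma\times_{M_\gamma}M_\alpha$, a compactum of weight $<\tau$ whose projection to $M_\alpha$ is $n$-invertible, apply the inductive hypothesis to $W_\alpha$ to get a perfect $n$-invertible $r_\alpha\colon X_\alpha\to W_\alpha$ with $\dim X_\alpha\le n$ and $w(X_\alpha)<\tau$, and let $f_\alpha$ and $q^\alpha_\gamma$ be $r_\alpha$ followed by the two projections of $W_\alpha$, setting $q^\alpha_\delta=q^\gamma_\delta q^\alpha_\gamma$ for $\delta<\gamma$; at a limit $\alpha$, put $X_\alpha=\varprojlim_{\beta<\alpha}X_\beta$, let $q^\alpha_\beta$ be the limit projections, and $f_\alpha=\varprojlim_{\beta<\alpha}f_\beta$, which maps into $\varprojlim_{\beta<\alpha}M_\beta=M_\alpha$ by continuity. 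Put $X=\varprojlim X_\alpha$ and $f=\varprojlim f_\alpha\colon X\to M$. Then $\dim X\le n$, being a limit of compacta of dimension $\le n$; $w(X)\le\tau$ by the construction and $w(X)\ge w(M)=\tau$ since $f$ is onto, so $w(X)=\tau$; and $f$ is $n$-invertible, for given $g\colon Z\to M$ with $\dim Z\le n$ one builds coherent lifts $h_\alpha\colon Z\to X_\alpha$ by recursion on $\alpha$ — at a successor $\alpha=\gamma+1$ the pair $(h_\gamma,g_\alpha)$ maps into $W_\alpha$ since $f_\gamma h_\gamma=g_\gamma=\pi^\alpha_\gamma g_\alpha$, and is lifted through the $n$-invertible $r_\alpha$; at a limit $\alpha$ one takes $h_\alpha=\varprojlim_{\beta<\alpha}h_\beta$ — so that $h=\varprojlim_\alpha h_\alpha$ satisfies $fh=g$.

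\emph{The main obstacle.} The delicate point is the coherence of the bonding maps: the $n$-invertible lifts out of which they are built are not canonical, so one cannot simply choose a lift for each pair $(\alpha,\beta)$ and expect to obtain a genuine inverse system. The pullback device is exactly the standard remedy — one chooses a lift only to the immediate predecessor, through the resolution $r_\beta$ of the pullback $W_\beta$, and composes for all earlier indices — and it is also what lets the very same recursion produce coherent lifts $h_\alpha$ in the verification that $f$ is $n$-invertible. The only ingredient that is not formal is the base case $\tau=\w$, which rests on the cited results of Dranishnikov and Pasynkov, and through these on suitable factorization theorems for finite-dimensional spaces. The remaining points — the treatment of singular $\tau$, the weight estimates at limit stages, and the dimension-theoretic check in the reduction from Tychonoff to compact (a standard consequence of the theory of covering dimension under perfect maps) — are routine.
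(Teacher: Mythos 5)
Your compact case is correct and is a genuinely different route from the paper's: you run a transfinite recursion on the weight, resolving each successor stage through a pullback and taking inverse limits at limit stages, whereas the paper makes a single application of Marde\v{s}i\'{c}'s factorization theorem to the \v{C}ech--Stone extension of one ``universal'' test map (the disjoint sum of all candidate domains of bounded weight). Both the formal facts you rely on (closure of $n$-invertibility under composition and pullback) and the coherence mechanism for the lifts check out, and the base case is the same Dranishnikov input the paper uses. The one step in your reduction to compacta that you should not call ``standard'' is $\dim (f')^{-1}(M)\le n$: the subset theorem for covering dimension fails for arbitrary Tychonoff subspaces of compacta, and this inequality is exactly what the paper imports from Pasynkov's monotonicity theorem; you need that citation, not a wave at ``behaviour of covering dimension under perfect maps.''

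The genuine gap is in the metrizable case. You assert that $X=(f')^{-1}(M)$, being a perfect preimage of the metrizable space $M$, is metrizable; this is false. Perfect preimages of metrizable spaces are exactly the paracompact $p$-spaces, not the metrizable ones: $M\times K$ for a non-metrizable compactum $K$ projects perfectly onto $M$, and even metrizability of all fibres does not help (the double-arrow space maps perfectly, with fibres of size $\le 2$, onto $[0,1]$ and is not metrizable). In your construction nothing controls the fibres of $f'$, which sit inside a compactum of weight $w(\beta M)$ --- typically far larger than $w(M)$ --- so both the metrizability of $X$ and the equality $w(X)=w(M)$ are unestablished. The repair is the one the paper uses: apply Pasynkov's factorization theorem for perfect maps to $f\colon X\to M$ to obtain $f=g\circ h$ with $g\colon X_1\to M$ perfect, $X_1$ metrizable, $w(X_1)\le w(M)$ and $\dim X_1\le n$; then note that the right-hand factor of an $n$-invertible map is $n$-invertible (lift a test map through $f$ and push forward by $h$), so $g$ works. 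As written, though, this part of your argument does not prove the second sentence of the theorem.
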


\begin{proof}
Let $M$ be a Tychonoff space of weight $\tau$. Consider all couples $(Z_\alpha,f_\alpha)$, where $Z_\alpha$ is a Tychonoff space of weight $w(Z_\alpha)\leq w(\beta M)$, $\dim Z_\alpha\leq n$ and
$f_\alpha$ is a map from $Z_\alpha$ into $M$ (here $\beta M$ is the \v{C}ech-Stone compactification of $M$).
Denote by $Z$ the disjoint sum of all spaces $Z_\alpha$. Obviously, there is a natural map $f\colon Z\to M$
such that $f|Z_\alpha=f_\alpha$ for all $\alpha$. Let $\widetilde{f}\colon\beta Z\to\beta M$ be the extension of $f$. Then, by the Marde\v{s}i\'{c}'s factorization theorem \cite{m}, there exists a compactum
$\widetilde X$ of weight $w(\widetilde X)\leq w(\beta M)$ with $\dim\widetilde{X}\leq\dim\beta Z=n$ and maps $h\colon \beta Z\to\widetilde X$,  $\widetilde{g}\colon\widetilde X\to\beta M$ such that $\widetilde{g}\circ h=\widetilde{f}$. Let $X=\widetilde{g}^{-1}(M)$ and $g=\widetilde{g}|X$. According to Corollary 6 and Main Theorem from \cite{pa}, $\dim X\leq n$. To show that $g$ is $n$-invertible, suppose $f_0\colon Z_0\to M$ is a map with $\dim Z_0\leq n$.
Applying again the Marde\v{s}i\'{c}'s factorization theorem for the map $\widetilde{f_0}\colon\beta Z_0\to\beta M$, we obtain a compactum $K$ and maps
$h_1\colon\beta Z_0\to K$ and $f_2\colon K\to\beta M$ such that $\dim K\leq n$, $w(K)\leq w(\beta M)$ and $f_2\circ h_1=\widetilde{f_0}$. Then, as above,  $Z'=f_2^{-1}(M)$ is a space of dimension $\leq n$ and weight $\leq w(\beta M)$. So, there exists $\alpha^*$ and a homeomorphism $j\colon Z'\to Z$ such that  $j(Z')=Z_{\alpha^*}$ and $f_{\alpha^*}\circ j=f_2|Z'$. Consequently, $h\circ j$ is a map from $Z'$ to $X$ with $f_2|Z'=g\circ h\circ j$. Finally, $h\circ j\circ h_1$ is a map from $Z_0$ to $X$ such that $g\circ h\circ j\circ h_1=f_0$.

If $M$ is metrizable, the proof is simpler. Indeed, in this case $P=\widetilde{f}^{-1}(M)$ is a space of dimension $\leq n$ and the restriction
$\widetilde{f}|P$ is a perfect map. So, by Pasynkov's factorization theorem \cite{pa1}, there exists a metrizable space $X$ and maps $h\colon P\to X$,
$g\colon X\to M$ such that $g\circ h=\widetilde{f}$, $w(X)\leq w(M)$ and $\dim X\leq n$. Then $g$ is a perfect map because so is $\widetilde{f}|P$, and according to the above arguments, $g$ is $n$-invertible.
\end{proof}

 The next proposition shows that Theorem 2.1 allows some properties of metrizable $ANR$-spaces to be transferred to metrizable $LC^n$-spaces.
\begin{pro}
Let $M$ be a metrizable $LC^n$-space and $\alpha$ an open cover of $M$. Then there exists an open cover $\beta$ of $M$ refining $\alpha$ such that for any two $\beta$-near maps $f,g\colon Z\to M$ defined on a metrizable space $Z$ of dimension $\leq n$ any $\beta$-homotopy $H\colon A\times [0,1]\to M$ between $f|A$ and $g|A$, where $A$ is closed in $Z$, can be extended to  an $\alpha$-homotopy $\widetilde{H}\colon Z\times [0,1]\to M$ connecting $f$ and $g$.
\end{pro}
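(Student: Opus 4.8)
\emph{Overview.} The statement is the $LC^{n}$-counterpart of a classical property of metrizable $ANR$'s, in which the dimension of $Z$ plays no role; the plan is to run the usual $ANR$ argument, but confined to dimension $\le n+1$, the range in which an $LC^{n}$-space is as good as an $ANR$. The one new ingredient, and the place where Theorem~2.1 enters, is a controlled neighbourhood-extension property for $LC^{n}$-spaces; the rest is the straight-line homotopy trick in an $AR$.

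\emph{The construction.} Put $C=(Z\times\{0,1\})\cup(A\times[0,1])$, a closed subset of $Z\times[0,1]$, and let $\phi\colon C\to M$ be the map equal to $f$ on $Z\times\{0\}$, to $g$ on $Z\times\{1\}$ and to $H$ on $A\times[0,1]$; these agree on the overlaps because $H$ joins $f|A$ and $g|A$. Since $Z$ is metrizable with $\dim Z\le n$, the space $Z\times[0,1]$ is metrizable with $\dim(Z\times[0,1])\le n+1$, and it is enough to extend $\phi$ to a map $\widetilde H\colon Z\times[0,1]\to M$ with $\widetilde H(\{z\}\times[0,1])$ lying in one member of $\alpha$ for each $z\in Z$. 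Embed $M$ as a closed subspace of a metrizable $AR$ $L$. If $\beta$ is chosen fine enough as a refinement of $\alpha$ -- this is where ``$\beta$ refines $\alpha$'' is used, namely to force $f(z)$ and $g(z)$ into one small piece of $M$ -- then the straight-line homotopy $\sigma(z,t)=(1-t)f(z)+tg(z)$ sends $Z\times[0,1]$ into a prescribed neighbourhood $U$ of $M$ in $L$ with each $\sigma(\{z\}\times[0,1])$ contained in one convex member of a prescribed cover of $U$. Pushing $\sigma$ into $M$ while holding it fixed on $Z\times\{0,1\}$ -- a controlled extension over the $(\le n+1)$-dimensional space $Z\times[0,1]$ -- produces a homotopy $\overline H\colon Z\times[0,1]\to M$ from $f$ to $g$ with each $\overline H(\{z\}\times[0,1])$ small. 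Since $\overline H$ and $H$ have the same ends on $A$ and are both small on verticals, $\overline H|_{C}$ is $\beta'$-close to $\phi$ for a suitably fine $\beta'$; a second application of the controlled extension, now over $Z\times[0,1]$ with boundary data $\phi$ on $C$ and control map $\overline H$, yields $\widetilde H\colon Z\times[0,1]\to M$ with $\widetilde H|_{C}=\phi$ and $\widetilde H$ so close to $\overline H$ that each $\widetilde H(\{z\}\times[0,1])$ lies in one member of $\alpha$. This $\widetilde H$ extends $H$ over $A\times[0,1]$ and connects $f$ and $g$, as required.

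\emph{The engine and the main obstacle.} Everything rests on a controlled neighbourhood-extension property of $LC^{n}$-spaces: with $M$ closed in a metrizable $AR$ $L$ and $\alpha$ an open cover of $M$, there are a neighbourhood $U$ of $M$ in $L$ and an open cover $\gamma$ of $U$, refining $\alpha$ on $M$, such that any map into $M$ defined on a closed subspace of a metrizable space of dimension $\le n+1$, together with any extension of it to a $\gamma$-small map into $U$, can be replaced by a map into $M$ agreeing with the original on that closed subspace and $\alpha$-close to the $U$-valued extension. For $M$ an $ANR$ this is immediate (compose with a neighbourhood retraction), and the point of Theorem~2.1 is to furnish a serviceable substitute for the retraction when $M$ is merely $LC^{n}$: the perfect $n$-invertible map $p\colon X\to M$ with $\dim X\le n$ solves all the $(\le n)$-dimensional mapping problems that occur, and combined with the characterization of metrizable $LC^{n}$-spaces as absolute neighbourhood extensors for $(n+1)$-dimensional spaces it reduces the controlled property for $M$ to standard facts about metrizable $ANR$'s. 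The remaining difficulty is purely organizational: one must fix in advance a chain of star-refinements $\alpha=\alpha_{0}\succ\alpha_{1}\succ\cdots$ so that the two successive controlled extensions accumulate to a genuine $\alpha$-homotopy, and -- the reason the naive $ANR$ proof does not transfer verbatim -- the whole construction has to be carried out inside $Z\times[0,1]$ and never inside $Z\times[0,1]\times[0,1]$, since an $LC^{n}$-space allows controlled extension only through dimension $n+1$. Finally, a routine fibrewise reparametrization of the $[0,1]$-coordinate, taken to be the identity over the closed set $A$, absorbs any discrepancy between a neighbourhood-extension and an extension over all of $Z\times[0,1]$ and makes $\widetilde H$ literally restrict to $H$ on $A\times[0,1]$.
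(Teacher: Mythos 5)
Your overall strategy --- do the homotopy extension in an ambient $ANR$ and then push the result back into $M$, using Theorem~2.1 to handle the $(n+1)$-dimensional domain $Z\times[0,1]$ --- is the right one and is essentially the paper's. But the step that carries all the weight, your ``controlled neighbourhood-extension property,'' is asserted rather than proved, and the ingredient you cite for it is the wrong one. An $n$-invertible map $p\colon X\to M$ with $\dim X\le n$ onto $M$ itself cannot lift maps defined on $Z\times[0,1]$ (which has dimension $n+1$) and says nothing about maps into a neighbourhood $U\supsetneq M$. What is actually needed is a perfect $(n+1)$-invertible map $p\colon Y_P\to P$ onto the \emph{ambient} $ANR$ with $\dim Y_P\le n+1$, an extension $q\colon p^{-1}(W)\to M$ of $p|p^{-1}(M)$ over a neighbourhood $W$ of $M$ (this uses that $M$ is an $ANE$ for $(n+1)$-dimensional spaces and that $\dim Y_P\le n+1$), and --- crucially --- the specific extensions $\widetilde V=P\setminus p\bigl(q^{-1}(M\setminus V)\bigr)$ of open sets $V\subset M$, which satisfy $p^{-1}(\widetilde V)\subset q^{-1}(V)$. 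That containment is the only thing that converts ``small in $P$'' into ``small in $M$'' after you lift through $p$ and compose with $q$; nothing in your proposal supplies this control, and ``standard facts about metrizable $ANR$'s'' do not, since there is no retraction of $U$ onto $M$ to compose with.

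Your two-stage construction also has a mismatch at the second stage: the engine, as you state it, requires the ambient-valued map to \emph{restrict} to the given $M$-valued boundary data on the closed subspace, but $\overline H$ only \emph{approximates} $\phi$ on $A\times[0,1]$; it does not extend it. Repairing this forces you to first correct $\overline H$ near $C$ inside the ambient $AR$ --- i.e., to redo the classical relative controlled homotopy extension --- at which point the first stage producing $\overline H$ was superfluous, and the closing ``fibrewise reparametrization'' does not address this issue. The paper's route is shorter and avoids both problems: apply Hu's relative controlled homotopy extension theorem once, in the open $ANR$ neighbourhood $L=\bigcup\{\widetilde U:U\in\alpha\}$ with the cover $\widetilde\alpha=\{\widetilde U\}$, to get an $\widetilde\alpha$-homotopy $F\colon Z\times[0,1]\to L$ extending $H$ and joining $f$ to $g$; then lift $F$ through the $(n+1)$-invertible $p$ and compose with $q$. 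The boundary data is preserved exactly because $q=p$ on $p^{-1}(M)$, and the $\alpha$-control follows from $p^{-1}(\widetilde U)\subset q^{-1}(U)$.
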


\begin{proof}
We embed $M$ as a closed subset of a metrizable $ANR$-space $P$ and let $p\colon Y_P\to P$ be a perfect $(n+1)$-invertible surjection such that $Y_P$ is a metrizable space of dimension $\leq n+1$ (see Theorem 2.1). Since $M$ is $LC^n$, there is an open set $G$ in $Y_P$ containing $p^{-1}(M)$ and a map $q\colon G\to M$ extending the restriction $p|p^{-1}(M)$. Then there exists an open set $W\subset P$ containing $M$ with $p^{-1}(W)\subset G$ (recall that $p$ is a perfect map). Obviously $W$ is also an $ANR$-space containing $M$ as a closed set. So, without losing generality, we may assume that $W=P$, $G=Y_P$ and $q$ is a map from $Y_P$ onto $M$. Now, for every open $U\subsetneqq M$, let $\widetilde{U}=P\backslash(p(q^{-1}(M\backslash U)))$. The set $\widetilde{U}$ is non-empty and open in $P$, $\widetilde{U}\cap M=U$ and $p^{-1}(\widetilde{U})\subset q^{-1}(U)$.

If  $\alpha$ is an open cover of $M$ consisting of proper subsets of $M$, the set $L=\bigcup\{\widetilde{U}:U\in\alpha\}$ is open in $P$ and contains $M$. So, $L$ is also an $ANR$ and $\widetilde{\alpha}=\{\widetilde{U}:U\in\alpha\}$ is an open cover of $L$. According to the properties of metrizable $ANR$'s (see for example \cite[chapter IV, Theorem 1.2]{hu}), there exists an open cover
$\widetilde{\beta}$ of $L$ with the following property: for any two $\beta$-near maps $h_1,h_2\colon Z\to M$ defined on a metrizable space $Z$ any $\beta$-homotopy $H\colon A\times [0,1]\to M$ between $h_1|A$ and $h_2|A$, where $A$ is closed in $Z$, can be extended to  an $\alpha$-homotopy $F\colon Z\times [0,1]$ connecting $h_1$ and $h_2$. Then $\beta=\{V\cap M:V\in\widetilde{\beta}\}$ is an open cover of $M$ refining $\alpha$ and has the desired property. Indeed, suppose $f,g\colon Z\to M$ are two $\beta$-near maps and $H\colon A\times [0,1]\to M$ is a $\beta$-homotopy between $f|A$ and $g|A$, where $Z$ is a metrizable space of dimension $\leq n$ and $A$ is closed in $Z$. According to the choice of $\widetilde{\beta}$, $H$ can be extended to an $\widetilde{\alpha}$-homotopy $F\colon Z\times [0,1]\to L$ connecting $f$ and $g$. Since $\dim Z\times [0,1]\leq n+1$ and $p$ is $(n+1)$-invertible, there exists a lifting $F_1\colon Z\times [0,1]\to p^{-1}(L)$ of $F$. Finally, $\widetilde{H}=q\circ F_1$ is an $\alpha$-homotopy between $f$ and $g$.
\end{proof}

We also need the following property of metrizable $LC^n$-spaces.
\begin{pro}
Suppose both $M$ and $P$ are metrizable $LC^n$-spaces with $M\subset P$ being closed. Then there exists an open set $U\subset P$ containing $M$ with the following property: If $Z$ is a metrizable space with $\dim Z\leq n$ and $h\colon Z\to U$ is a map, there exists a map $h_1\colon Z\to M$ such that $h$ and $h_1$ are homotopic in $P$.
\end{pro}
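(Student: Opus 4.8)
The plan is to lift the whole problem along an $n$-invertible map to a space of dimension $\le n$, where being $LC^n$ turns into an honest extension property for $M$, and then to transport the resulting homotopy back down to $P$ with the help of Proposition 2.2.

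First I would use Theorem 2.1 to fix a perfect $n$-invertible surjection $p\colon Y\to P$ with $Y$ metrizable and $\dim Y\le n$. Since $M$ is closed in $P$, the set $p^{-1}(M)$ is closed in $Y$, and since $M$ is $LC^n$ and $\dim Y\le n\le n+1$, the restriction $p|p^{-1}(M)\colon p^{-1}(M)\to M$ extends to a map $q\colon G\to M$ for some open neighborhood $G$ of $p^{-1}(M)$ in $Y$. In parallel, applying Proposition 2.2 to $P$ (with an arbitrary open cover) and taking $A=\emptyset$ in its conclusion, I obtain an open cover $\beta$ of $P$ such that any two $\beta$-near maps of a metrizable space of dimension $\le n$ into $P$ are homotopic in $P$.

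The heart of the argument is then to shrink $G$ so that $q$ and $p$ are $\beta$-near there. For each $y\in p^{-1}(M)$ choose $B_y\in\beta$ with $p(y)=q(y)\in B_y$ and put $O_y=q^{-1}(B_y)\cap p^{-1}(B_y)\cap G$, an open neighborhood of $y$ in $Y$; then $G_1=\bigcup\{O_y:y\in p^{-1}(M)\}$ is an open neighborhood of $p^{-1}(M)$ in $Y$, contained in $G$, on which $q$ and $p$ — both regarded as maps into $P$, the former through the inclusion $M\subset P$ — are $\beta$-near, since each point of $G_1$ lies in some $O_y$ and is carried by both maps into $B_y$. As $p$ is perfect, $p(Y\setminus G_1)$ is closed in $P$ and disjoint from $M$, so $U:=P\setminus p(Y\setminus G_1)$ is an open neighborhood of $M$ in $P$ with $p^{-1}(U)\subset G_1$.

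Finally I would check that this $U$ is as required. Given a metrizable $Z$ with $\dim Z\le n$ and a map $h\colon Z\to U$, $n$-invertibility of $p$ yields $\widetilde h\colon Z\to Y$ with $p\circ\widetilde h=h$; since $h(Z)\subset U$ we get $\widetilde h(Z)\subset p^{-1}(U)\subset G_1\subset G$, so $h_1:=q\circ\widetilde h$ is a well-defined map from $Z$ into $M$. The maps $h=p\circ\widetilde h$ and $h_1=q\circ\widetilde h$ are $\beta$-near, because $q$ and $p$ are $\beta$-near on $G_1\supseteq\widetilde h(Z)$, and hence they are homotopic in $P$ by the choice of $\beta$. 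The only points requiring care are the construction of $G_1$ and the use of the perfectness of $p$ to obtain the neighborhood $U$ of $M$ in $P$ from the neighborhood $G_1$ of $p^{-1}(M)$ in $Y$; the dimension bookkeeping ($\dim Y\le n$ to extend $p|p^{-1}(M)$, and $n$-invertibility to lift $h$) is precisely what Theorem 2.1 delivers.
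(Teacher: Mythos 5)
Your proposal is correct and follows essentially the same route as the paper: fix the perfect invertible map $p$ and the partial extension $q$ of $p|p^{-1}(M)$ from Theorem 2.1, use Proposition 2.2 to produce a cover $\beta$ for which $\beta$-near maps of at most $n$-dimensional metrizable spaces are homotopic, arrange $U$ so that $p$ and $q$ are $\beta$-near over $p^{-1}(U)$, and then compare $h=p\circ\widetilde h$ with $h_1=q\circ\widetilde h$. The only cosmetic difference is that you build the neighborhood upstairs in $Y$ and push it down via perfectness of $p$, whereas the paper defines $U$ directly as $\bigcup_{V\in\beta}\bigl(V\setminus p(q^{-1}(M\setminus V))\bigr)$; both yield the same $\beta$-nearness of $p$ and $q$ over $U$.
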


\begin{proof}
Let $p\colon Y_P\to P$ be a perfect $(n+1)$-invertible surjection such that $Y_P$ is a metrizable space of dimension $\leq n+1$, and $q\colon p^{-1}(W)\to M$ extends the restriction $p|p^{-1}(W)$, where $W\subset P$ is an open set containing $M$ (see the proof of Proposition 2.2). Since $W$ (as an open subset of $P$) is $LC^n$, we can apply Proposition 2.2 (with $\alpha=\{W\}$ and $A$ a point), to obtain that $W$ has an open cower $\beta$ such that any two $\beta$-near maps from $Z$ into $W$ are homotopic. For every $V\in\beta$ let
$G_V=V\backslash p(q^{-1}(M\backslash (M\cap V)))$ and $U=\bigcup\{G_V:V\in\beta\}$. Because $\beta$ covers $M$, $M\subset U$. If $h\colon Z\to U$ is any map, let $h_1\colon Z\to M$ be the map $h_1=q\circ\widetilde h$, where $\widetilde h\colon Z\to p^{-1}(U)$ is a lifting of $h$. It is easily seen that the maps $h$ and $h_1$ are $\beta$-near. So, $h$ and $h_1$ are homotopic in $W$, and hence in $P$.
\end{proof}

\begin{cor}
If $M$ and $P$ are metrizable $LC^n$-spaces such that $M\subset P$ is closed, then $M$ is $ALC^n$ in $P$.
\end{cor}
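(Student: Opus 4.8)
The plan is to verify the definition of $ALC^n$ by hand, with Proposition~2.3 doing the work of pushing a map from a neighborhood of $M$ back into $M$, and the local $n$-connectivity of $M$ finishing the pushed map off by a null-homotopy. First I would fix a point $x\in M$ together with an open neighborhood $U$ of $x$ in $M$. Since $M$ is $LC^n$, there is an open neighborhood $V$ of $x$ in $M$ with $V\subseteq U$ such that every map $\mathbb S^k\to V$ with $k\le n$ is null-homotopic in $U$; I claim this $V$ is a neighborhood of $x$ of the kind required by the definition of $ALC^n$. I would also fix, once and for all, an open set $O\subseteq P$ with $O\cap M=V$.

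Next, given an arbitrary open set $\widetilde U\subseteq P$ with $\widetilde U\cap M=U$, I would work inside $Q:=\widetilde U\cap O$. This $Q$ is open in $P$, hence a metrizable $LC^n$-space (being $LC^n$ is a local property and so passes to open subspaces), and a short computation gives $M\cap Q=U\cap O=V$; thus $V$ is a closed subspace of $Q$, and it is metrizable $LC^n$ since it is open in $M$. Applying Proposition~2.3 to the pair $V\subseteq Q$ produces an open set $\widetilde V$ with $V\subseteq\widetilde V\subseteq Q$ such that every map from a metrizable space of dimension $\le n$ into $\widetilde V$ is homotopic, inside $Q$, to a map into $V$. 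Since $\widetilde V\subseteq Q$ we have $\widetilde V\cap M\subseteq Q\cap M=V$, while $\widetilde V\supseteq V$, so $\widetilde V\cap M=V$; that is, $\widetilde V$ is an open extension of $V$, and moreover $\widetilde V\subseteq Q\subseteq\widetilde U$.

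Then I would check the null-homotopy clause. Given a map $f\colon\mathbb S^k\to\widetilde V$ with $k\le n$, the sphere $\mathbb S^k$ is metrizable of dimension $k\le n$, so Proposition~2.3 yields a homotopy inside $Q$ from $f$ to a map $f_1\colon\mathbb S^k\to V$, and by the choice of $V$ the map $f_1$ is null-homotopic in $U$. Concatenating the two homotopies, each of which takes place inside $\widetilde U$ since $Q\subseteq\widetilde U$ and $U\subseteq\widetilde U$, shows that $f$ is null-homotopic in $\widetilde U$. This is precisely the assertion that $M$ is $ALC^n$ in $P$.

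The step I expect to require care is the choice of what to push into. Applying Proposition~2.3 naively to the pair $U\subseteq\widetilde U$ would only let one homotope a map into $U$, and since $U$ need not be small the local condition $LC^n$ of $M$ would give no way to proceed. The remedy is to fix the small neighborhood $V$ in advance via $LC^n$ of $M$, and then to apply Proposition~2.3 not to $U\subseteq\widetilde U$ but to $V\subseteq Q$, where $Q=\widetilde U\cap O$ has been pre-intersected with a fixed open set $O$ whose trace on $M$ is exactly $V$; this ensures simultaneously that the resulting $\widetilde V$ has trace $V$ on $M$ (so that $V$ is genuinely independent of $\widetilde U$) and that the pushed map lands in $V$, where $LC^n$ of $M$ can finally be used. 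The remaining ingredients — that $LC^n$ descends to open subsets and the elementary identities for traces on $M$ — are routine.
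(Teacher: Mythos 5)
Your proof is correct and is essentially the paper's own argument: the paper likewise fixes $V$ via the $LC^n$ property of $M$, chooses an open extension $G\subset\widetilde U$ of $V$ (your $Q=\widetilde U\cap O$), applies Proposition~2.3 to the closed pair $V\subset G$ to get $\widetilde V$, and concatenates the push-in homotopy with the null-homotopy in $U$.
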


\begin{proof}
Since $M$ is $LC^n$, for every $x\in M$ and its open neighborhood $U$ in $M$ there exists a neighborhood $V$ of $x$ in $M$ such that any map of a $k$-sphere, $k\leq n$, into $V$ is contractible in $U$. Let $\widetilde U\subset P$ and $G\subset\widetilde U$ be open in $P$ extensions of $U$ and $V$, respectively. Then both $V$ and $G$ are $LC^n$ (as open subsets of $LC^n$-spaces), and $V$ is closed in $G$. So, there is an open extension $\widetilde V\subset G$
satisfying the conclusion from Proposition 2.4. Consequently, any map $g\colon\mathbb S^k\to\widetilde V$, $0\leq k\leq n$, is homotopic in $G$ to a map
$g_1\colon\mathbb S^k\to V$. Since $g_1$ is homotopic in $U$ to a constant map, we obtain that $g$ is homotopic in $\widetilde U$ to a constant map.
\end{proof}

\begin{pro}
Let $P$ be metrizable and $M\subset P$ be a closed and $LC^n$-set. Then every closed set $A\subset M$ is $UV^n$ in $M$ provided $A$ is $UV^n$ in $P$.
\end{pro}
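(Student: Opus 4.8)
The plan is to carry out, directly over the space $P$, the resolution construction from the proof of Proposition 2.2 and then to transfer a contraction that a priori lives in $P$ down into $M$ by means of $(n+1)$-invertibility. Fix an open neighborhood $U$ of $A$ in $M$; I must exhibit an open $V$ with $A\subset V\subset U$ such that every map $f\colon\mathbb S^k\to V$, $0\leq k\leq n$, is null-homotopic in $U$. First I would take, by Theorem 2.1, a perfect $(n+1)$-invertible surjection $p\colon Y_P\to P$ with $Y_P$ metrizable and $\dim Y_P\leq n+1$. Since $M$ is $LC^n$, it is an absolute neighborhood extensor for $(n+1)$-dimensional paracompact spaces (see the Introduction), so the restriction $p|p^{-1}(M)\colon p^{-1}(M)\to M$ extends to a map $q\colon G\to M$ over an open set $G\subset Y_P$ with $p^{-1}(M)\subset G$; as $p$ is perfect there is an open $W\subset P$ containing $M$ with $p^{-1}(W)\subset G$, and I may assume $G=p^{-1}(W)$. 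Exactly as in Proposition 2.2, putting $\widetilde U=W\setminus p(q^{-1}(M\setminus U))$ gives a set that is open in $P$, satisfies $\widetilde U\cap M=U$ (because $q$ agrees with $p$ over $M$), hence $A\subset\widetilde U$, and has $p^{-1}(\widetilde U)\subset q^{-1}(U)$.

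Next I would invoke the hypothesis that $A$ is $UV^n$ in $P$: applied to the neighborhood $\widetilde U$ of $A$ it produces an open $\widetilde V$ with $A\subset\widetilde V\subset\widetilde U$ such that every map $\mathbb S^k\to\widetilde V$, $0\leq k\leq n$, is null-homotopic in $\widetilde U$. Put $V=\widetilde V\cap M$, an open neighborhood of $A$ in $M$ with $V\subset U$, and fix a point $a\in A$. Given $f\colon\mathbb S^k\to V$ with $0\leq k\leq n$, regarded as a map into $\widetilde V$ it is null-homotopic in $\widetilde U$; moreover the case $k=0$ of the chosen property shows that $\widetilde V$ lies in a single path-component of $\widetilde U$, so any null-homotopy of $f$ in $\widetilde U$ can be continued by a path to terminate at the prescribed point $a$. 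Thus there is a homotopy $H\colon\mathbb S^k\times[0,1]\to\widetilde U$ with $H_0=f$ and $H_1$ the constant map at $a$.

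To finish, since $\dim(\mathbb S^k\times[0,1])\leq n+1$ and $p$ is $(n+1)$-invertible, I would lift $H$ to a map $\widehat H\colon\mathbb S^k\times[0,1]\to p^{-1}(\widetilde U)\subset q^{-1}(U)$ with $p\circ\widehat H=H$; then $q\circ\widehat H\colon\mathbb S^k\times[0,1]\to U$ is a homotopy in $U$, and because $q$ coincides with $p$ on $p^{-1}(M)$ it satisfies $q(\widehat H(s,0))=p(\widehat H(s,0))=f(s)$ and $q(\widehat H(s,1))=p(\widehat H(s,1))=a$, i.e.\ it joins $f$ to the constant map at $a$. Hence $f$ is null-homotopic in $U$, and as $U$ was an arbitrary neighborhood of $A$ in $M$, this shows $A$ is $UV^n$ in $M$. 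The step I expect to be the crux is precisely this last transfer: the $UV^n$ property of $A$ only contracts spheres inside $P$, and turning such a contraction into one inside $M$ requires both lifting it through the resolution $p$ and pushing it back by $q$; for the outcome to be an honest contraction in $M$ one has to arrange that the homotopy ends at a point of $M$, which forces the use of the $0$-dimensional instance of $UV^n$ (equivalently, local path-connectedness of the small neighborhood) alongside the $(n+1)$-dimensional extension property of $LC^n$-spaces.
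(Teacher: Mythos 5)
Your proof is correct and follows essentially the same route as the paper: the same resolution $p,q$ from Theorem 2.1 and Proposition 2.2, the same set $\widetilde U=W\setminus p(q^{-1}(M\setminus U))$ with $p^{-1}(\widetilde U)\subset q^{-1}(U)$, and the same transfer by lifting through $p$ and pushing by $q$. The only difference is that the paper encodes null-homotopy as an extension over $\mathbb B^{k+1}$, which makes your endpoint-normalization step (via the $k=0$ case of $UV^n$) unnecessary, since $q\circ g_2$ automatically lands in $U\subset M$.
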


\begin{proof}
Let $p\colon Y_P\to P$ be a perfect $(n+1)$-invertible surjection such that $Y_P$ is a metrizable space of dimension $\leq n+1$, and $q\colon p^{-1}(W)\to M$ extends the restriction $p|p^{-1}(W)$, where $W\subset P$ is an open set containing $M$. Suppose $U\subset M$ is an open set containing $A$ and let $\widetilde U=W\backslash p(q^{-1}(M\backslash U))$. Obviously, $\widetilde U\subset W$ is an open extension of $U$. Since $A\in UV^n(P)$, there is an open set $\widetilde V\subset\widetilde U$ containing $A$ such that any map from  $\mathbb S^k$ with $0\leq k\leq n$ to $\widetilde V$ can be extended to a map from $\mathbb B^{k+1}$ to $\widetilde U$. Let $V=M\cap\widetilde V$ and $g\colon\mathbb S^k\to V$ be a map. Then extend $g$  to a map $g_1\colon\mathbb B^{k+1}\to\widetilde U$ and take a lifting $g_2\colon\mathbb B^{k+1}\to p^{-1}(\widetilde U)$ of $g_1$. Finally,
$\widetilde g=q\circ g_2$ is a map from $\mathbb B^{k+1}$ to $U$ extending $g$.
\end{proof}

Next lemma is a non-compact analogue of Lemma 2.1 from \cite{dr}.
\begin{lem}
Suppose both $M$ and $P$ are metric $LC^{n-1}$-spaces such that $M$ is a closed subset of $P$. Then for every $\epsilon>0$ there exists a neighborhood $U_\epsilon(M)$ in $P$ such that for any map $\varphi\colon (Q,Q_0)\to (U_\epsilon(M),M)$, where $(Q,Q_0)$ is a polyhedral pair, there exists a map $\psi\colon (Q^{(n)},Q_0)\to M$ such that $\psi|Q_0=\varphi|Q_0$ and both $\varphi$ and $\psi$ are $\epsilon$-close.
\end{lem}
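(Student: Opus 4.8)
The plan is to follow the reduction used throughout Section~2: replace $P$ by a space of dimension $\le n$ along which maps from $n$-dimensional polyhedra lift, and then carry $\varphi$ into $M$ by lifting it and composing with an extension of $p|p^{-1}(M)$ over a neighborhood of $p^{-1}(M)$. First I would apply Theorem~2.1 to the integer $n$: let $p\colon Y_P\to P$ be a perfect $n$-invertible surjection with $Y_P$ metrizable and $\dim Y_P\le n$. Since $M$ is a metrizable $LC^{n-1}$-space, it is an absolute neighborhood extensor for paracompact spaces of dimension $\le n$; as $Y_P$ is such a space and $p^{-1}(M)$ is closed in it, the map $p|p^{-1}(M)\colon p^{-1}(M)\to M$ extends to a map $q\colon G\to M$ with $G$ open in $Y_P$ and $p^{-1}(M)\subset G$. (This is the only place where local connectivity enters, and only for $M$.)

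Now fix $\epsilon>0$ and let $d$ be the metric of $P$. The set $O=\{y\in G: d(p(y),q(y))<\epsilon\}$ is open in $Y_P$, and it contains $p^{-1}(M)$ because $q=p$ on $p^{-1}(M)$. Since $p$ is closed, $U_\epsilon(M):=P\setminus p(Y_P\setminus O)$ is open in $P$, contains $M$, and satisfies $p^{-1}(U_\epsilon(M))\subset O\subset G$; this is the neighborhood asserted by the lemma.

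Next, let $(Q,Q_0)$ be a polyhedral pair and $\varphi\colon(Q,Q_0)\to(U_\epsilon(M),M)$ a map. Because $\dim Q^{(n)}\le n$ and $p$ is $n$-invertible, I would lift $\varphi|Q^{(n)}$, regarded as a map into $P$, to a map $g\colon Q^{(n)}\to Y_P$ with $p\circ g=\varphi|Q^{(n)}$. Then $g(Q^{(n)})\subset p^{-1}(U_\epsilon(M))\subset O\subset G$, so $q\circ g$ is defined, and $g$ maps $Q^{(n)}\cap Q_0$ into $p^{-1}(M)$ because $\varphi(Q_0)\subset M$. Define $\psi$ on $Q^{(n)}\cup Q_0$ by $\psi=q\circ g$ on $Q^{(n)}$ and $\psi=\varphi$ on $Q_0$; on the overlap both formulas give $\varphi$ (since $q=p$ on $p^{-1}(M)$), so $\psi$ is a well-defined continuous map into $M$ with $\psi|Q_0=\varphi|Q_0$. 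Finally, for $x\in Q^{(n)}$ one has $d(\psi(x),\varphi(x))=d(q(g(x)),p(g(x)))<\epsilon$ because $g(x)\in O$, while $\psi=\varphi$ on $Q_0$; hence $\varphi$ and $\psi$ are $\epsilon$-close. (If the convention is that $Q_0\subset Q^{(n)}$, the gluing is unnecessary and $\psi=q\circ g$ does it.)

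I do not expect a genuinely hard step: once Theorem~2.1 is available the argument is essentially bookkeeping. The point that needs care is the dimension count — matching "$M$ is $LC^{n-1}$" with "$M$ is an absolute neighborhood extensor in dimension $n$" (to produce $q$) and with the fact that $p$ is $n$-invertible on $n$-dimensional domains (to produce $g$) — together with the correct use of the perfectness of $p$ to convert an open neighborhood of $p^{-1}(M)$ in $Y_P$ into a neighborhood of $M$ in $P$ on which $p$ and $q$ differ by less than $\epsilon$.
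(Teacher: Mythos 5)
Your proof is correct and follows essentially the same route as the paper: take the perfect $n$-invertible map $p\colon Y_P\to P$ with $\dim Y_P\le n$ from Theorem~2.1, extend $p|p^{-1}(M)$ to $q$ over a neighborhood of $p^{-1}(M)$ using that $M$ is $LC^{n-1}$, lift $\varphi|Q^{(n)}$ through $p$, and set $\psi=q\circ(\text{lift})$. The only (harmless) difference is in how $U_\epsilon(M)$ is produced: the paper covers $M$ by sets $V$ of diameter $<\epsilon$ and takes the union of the saturated extensions $P\setminus p(q^{-1}(M\setminus V))$, whereas you push down the open set $\{y: d(p(y),q(y))<\epsilon\}$ via the closedness of $p$ — your variant actually makes the $\epsilon$-closeness estimate slightly more immediate.
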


\begin{proof}
The proof is similar to that one of Proposition 2.5, the only difference is that the map $p:Y_P\to P$ is $n$-invertible and $\dim Y_P\leq n$. We take an open cover $\omega$ of $M$ with each $V\in\omega$ having a diameter $<\epsilon$. For every
$V\in\omega$ consider the set $\widetilde V=P\backslash p(q^{-1}(M\backslash V))$ and let $U_\epsilon(M)=\bigcup\{\widetilde V:V\in\omega\}$. If $\varphi\colon (Q,Q_0)\to (U_\epsilon(M),M)$, we first lift $\varphi|Q^{(n)}$ to a map $\varphi_1\colon Q^{(n)}\to Y_P$ and define $\psi\colon (Q^{(n)},Q_0)\to M$ to be the map
with $\psi|Q_0=\varphi|Q_0$ and $\psi|Q^{(n)}=q\circ\varphi_1$. Obviously, $\psi$ satisfies the required conditions.
\end{proof}

Now, we are in a position to prove the main theorem in this section.

\begin{thm}
For a complete metric space $(M,d)$ the following are equivalent:
\begin{itemize}
\item[(i)] $M$ is $LC^n$;
\item[(ii)] $M$ is $ALC^n$;
\item[(iii)] $M$ is $WLC^n$.
\end{itemize}
\end{thm}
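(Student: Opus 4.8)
The plan is to establish the cycle of implications (i) $\Ra$ (ii) $\Ra$ (iii) $\Ra$ (i), using the machinery of Section~2 wherever a reduction to $ANR$-properties is needed. The implication (ii) $\Ra$ (iii) is immediate from the definitions: an $ALC^n$ set is in particular $WLC^n$, since the $ALC^n$-condition produces, for a \emph{given} open extension $\widetilde U$ of $U$, an open extension $\widetilde V$ of $V$ through which spheres die in $\widetilde U$, which is exactly what $WLC^n$ demands. So the real content is (i) $\Ra$ (ii) and (iii) $\Ra$ (i).

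For (i) $\Ra$ (ii): embed $M$ as a closed subset of a metrizable $ANR$-space $P$; we must verify that $M$ is $ALC^n$ in $P$, which by the remark in the introduction is independent of the chosen $ANR$. Fix $x\in M$ and an open neighborhood $U$ of $x$ in $M$; since $M$ is $LC^n$ choose a neighborhood $V$ of $x$ in $M$ so that every map $\mathbb S^k\to V$, $k\le n$, is null-homotopic in $U$. Given an arbitrary open extension $\widetilde U\subset P$ of $U$, I would first shrink to an open extension $G\subset\widetilde U$ of $V$. Now $G$ and $V$ are both $LC^n$ (open subsets of $LC^n$-spaces) with $V$ closed in $G$, so Proposition~2.4 applies: there is an open extension $\widetilde V\subset G$ of $V$ such that any map $Z\to\widetilde V$ from a metrizable $Z$ with $\dim Z\le n$ is homotopic in $G$ to a map $Z\to V$. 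Taking $Z=\mathbb S^k$, any $g\colon\mathbb S^k\to\widetilde V$ is homotopic in $G\subset\widetilde U$ to some $g_1\colon\mathbb S^k\to V$, which is null-homotopic in $U\subset\widetilde U$; hence $g$ is null-homotopic in $\widetilde U$. This is exactly Corollary~2.5, so (i) $\Ra$ (ii) is already in hand.

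For (iii) $\Ra$ (i): again embed $M$ closedly in a metrizable $ANR$-space $P$, and use the construction from the proof of Proposition~2.2 — a perfect $(n+1)$-invertible surjection $p\colon Y_P\to P$ with $\dim Y_P\le n+1$ and a retraction-type map $q\colon p^{-1}(W)\to M$ extending $p|p^{-1}(M)$ on a neighborhood $W$ of $M$, with the associated open extensions $\widetilde U=W\setminus p(q^{-1}(M\setminus U))$ satisfying $\widetilde U\cap M=U$ and $p^{-1}(\widetilde U)\subset q^{-1}(U)$. Fix $x\in M$ and an open neighborhood $U$ of $x$ in $M$; form $\widetilde U$ as above. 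Apply the $WLC^n$-property to $x$ and the open set $U$: there is a neighborhood $V$ of $x$ in $M$ such that any map $\mathbb S^k\to V$, $k\le n$, is null-homotopic in $\widetilde U$ (here $\widetilde U$ is a legitimate open extension of $U$ in $P$). I claim $V$ witnesses $LC^n$ at $x$ relative to $U$. Indeed, let $g\colon\mathbb S^k\to V$ be a map; then $g$ extends to $g_1\colon\mathbb B^{k+1}\to\widetilde U$. Lift $g_1$ through the $(n+1)$-invertible $p$ (legitimate since $\dim\mathbb B^{k+1}=k+1\le n+1$) to $g_2\colon\mathbb B^{k+1}\to p^{-1}(\widetilde U)\subset q^{-1}(U)$, and set $\widetilde g=q\circ g_2\colon\mathbb B^{k+1}\to U$; this is an extension of $g$ into $U$, so $g$ is null-homotopic in $U$. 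Completeness of $M$ is used precisely to ensure $M$ embeds closedly in a \emph{complete} metrizable, hence $ANR$-realizable, ambient space and that the invertible-map machinery of Section~2 applies; it also guarantees that $WLC^n$ does not depend on the chosen $ANR$.

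The only genuine obstacle is bookkeeping the open extensions so that the liftings land where they should: one must choose $V\subset U$ small and the extension $\widetilde U$ so that $p^{-1}(\widetilde U)\subset q^{-1}(U)$ holds with the \emph{target} being the prescribed $U$ (not merely some neighborhood of $M$), which the formula $\widetilde U=W\setminus p(q^{-1}(M\setminus U))$ arranges automatically; after that every step is a routine application of $n$- or $(n+1)$-invertibility together with the retraction $q$. I would present the proof as the three implications above, citing Corollary~2.5 for (i)$\Ra$(ii), the definitional triviality for (ii)$\Ra$(iii), and the lifting argument for (iii)$\Ra$(i).
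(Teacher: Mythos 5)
Your implications (i)$\Rightarrow$(ii) and (ii)$\Rightarrow$(iii) are fine and agree with the paper (the first is exactly Corollary~2.4, the second is definitional). The problem is (iii)$\Rightarrow$(i): your argument is circular. The map $q\colon p^{-1}(W)\to M$ extending $p|p^{-1}(M)$, and hence the open extensions $\widetilde U=W\setminus p(q^{-1}(M\setminus U))$ with $p^{-1}(\widetilde U)\subset q^{-1}(U)$, exist precisely because $M$ is assumed $LC^n$: one extends $p|p^{-1}(M)$ over a neighborhood of $p^{-1}(M)$ in the at most $(n+1)$-dimensional space $Y_P$ using that metrizable $LC^n$-spaces are absolute neighborhood extensors for $(n+1)$-dimensional paracompacta. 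Under hypothesis (iii) alone $M$ is only $WLC^n$, so no such $q$ is available, and your lifting step $g_2\mapsto q\circ g_2$ has nothing to compose with. This is the whole difficulty of the implication: $WLC^n$ kills spheres only in open subsets of the ambient $ANR$ $P$, and one must find a way back into $M$ without already knowing $M$ is an $ANE$ for $(n+1)$-dimensional spaces.

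The paper resolves this by induction on $n$. Assuming $M$ is $LC^{n-1}$ (the base case $LC^{-1}$ being vacuous), Lemma~2.6 — which uses only the $LC^{n-1}$ hypothesis, via an $n$-invertible map onto $P$ with fibers of an at most $n$-dimensional domain — supplies neighborhoods $U_\epsilon(M)$ in $P$ in which maps of polyhedra can be $\epsilon$-pushed into $M$ on their $n$-skeleta, rel a subpolyhedron already mapped into $M$. Combining this with a Dugundji--Michael uniformization of the $WLC^n$ data (a complete metric $\rho$ and a modulus $\delta(\epsilon)$), one extends a small $f\colon\mathbb S^n\to M$ cell by cell over finer and finer triangulations of $\mathbb B^{n+1}$, obtaining maps $f_k$ into shrinking neighborhoods of $M$ whose $n$-skeleta lie in $M$; the $f_k$ form a Cauchy sequence whose limit lands in $M$. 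This is also where completeness is genuinely used — for the convergence of $\{f_k\}$ and for $\widetilde f(\mathbb B^{n+1})\subset M=\bigcap_k G_k$ — not, as you suggest, to secure a closed embedding into an $ANR$ (every metrizable space embeds closedly into a metrizable $ANR$). To repair your proof you would need to replace the single application of $q$ by such an inductive approximation-and-limit scheme.
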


\begin{proof}
Implication $(i)\Rightarrow (ii)$ follows from Corollary 2.4, and implication $(ii)\Rightarrow (iii)$ is trivial.\\
$(iii)\Rightarrow (i)$. We are going to prove this implication by induction. Since $M$ is $LC^{-1}$ (there is no such thing as a $(-1)$-sphere), we can suppose that $M$ is $LC^{n-1}$ and $WLC^n$.
We embed $M$ as a closed subset of a complete metric $ANR$-space $P$ and consider the following relation between the open subsets of $M$: $V\alpha U$ if $V\subset U$ and every map from $\mathbb S^k$, $k\leq n$, into $V$ is null-homotopic in $\widetilde U$, where $\widetilde U$ is any open extension of $U$ in $P$. It follows from \cite[Theorem 1]{dm} the existence a complete metric $\rho$ on $M$ generating its topology such that for every $\epsilon>0$ there exists $\delta(\epsilon)>0$ with $B^\rho_{\delta(\epsilon)}(x)\alpha B^\rho_\epsilon(x)$ for all $x\in M$, where
$B^\rho_\epsilon(x)=\{y\in M:\rho(x,y)<\epsilon\}$. The metric $\rho$ can be extended to a complete metric $\widetilde\rho$ on $P$, see \cite{ba}.
We fix a sequence $\{G_k\}_{k\geq 1}$ of open subsets of $P$ containing $M$ with $\bigcap_{k=1}^\infty G_k=M$, and define by induction a decreasing sequence $\{W_k\}_{k\geq 1}$ of open subsets of $P$ such that
$W_k\subset U_{2^{-k}}(M)\cap G_k$, where $U_{2^{-k}}(M)$ is a neighborhood of $M$ in $P$ corresponding to $2^{-k}$ (see Lemma 2.6). Let $\{g_k\}_{k\geq 1}$ be a sequence of continuous functions $g_k\colon P\to [0,1]$ such that $g_k(M)=1$ and $g_k(P\backslash W_k)=0$. Then the equality
$\displaystyle\varrho(x,y)=\widetilde\rho(x,y)+\sum_{k=1}^\infty\frac{|g_k(x)-g_k(y)|}{2^{k}}$  provides a metric on $P$. It easily seen that $\varrho$ is a complete metric generating the topology of $P$. Moreover, $\varrho(x,y)=\widetilde\rho(x,y)=\rho(x,y)$ for all $x,y\in M$. For any $x\in M$ and $y\in P\backslash W_k$ we have $$\displaystyle\varrho(x,y)=\widetilde\rho(x,y)+\sum_{j=1}^{j=k-1}\frac{|1-g_j(y)|}{2^{j}}+\sum_{j=k}^\infty\frac{1}{2^{j}}\geq\frac{1}{2^{k-1}}.$$
Hence, $\varrho(M,P\backslash W_k)\geq 2^{-k+1}$ for all $k$. For every $\epsilon>0$ let $k_\epsilon=\min\{k:2^{-k}<\epsilon\}$,
$\eta(\epsilon)=2^{-k_\epsilon+1}$ and $B_\epsilon^\varrho(M)=\{y\in P:\varrho(y,M)<\epsilon\}$.

\smallskip
\textit{Claim $1$. Let $(Q,Q_0)$ be a polyhedral pair and $\epsilon>0$. Then for any map $\varphi: Q\to B_{\eta(\epsilon)}^\varrho(M)$ with $\varphi(Q_0)\subset M$ there exists a map $\psi:Q^{(n)}\cup Q_0\to M$ such that $\psi|Q_0=\varphi|Q_0$ and $\psi$ is $\epsilon$-close to $\varphi$}.

Since
$\varrho(M,P\backslash W_{k_\epsilon})\geq 2^{-k_\epsilon+1}=\eta(\epsilon)$, $B_{\eta(\epsilon)}^\varrho(M)\subset W_{k_\epsilon}$. Hence,
$B_{\eta(\epsilon)}^\varrho(M)\subset U_{2^{-k_\epsilon}}$, and according to Lemma 2.6, there exists a map such that $\psi:Q^{(n)}\cup Q_0\to M$ such that $\psi|Q_0=\varphi|Q_0$ and $\psi$ is $(2^{-k_\epsilon})$-close to $\varphi$. Then the inequality $2^{-k_\epsilon}<\epsilon$ completes the proof of Claim 1.

To prove that $M$ is $LC^n$, we introduce the following notation: If $U\subset M$ is open and $\gamma>0$, then $E_\gamma(U)$ denotes any open extension of $U$ in $P$ witch is contained in the set $B^\varrho_\gamma(M)$. Since $M$ is $WLC^n$, according to the choice of the function $\delta$, any map $\varphi:\mathbb S^n\to M$ with
$\mathrm{diam}\varphi(\mathbb S^n)<\delta(\epsilon)$ can be extended to a map $\widetilde{\varphi}\colon\mathbb B^{n+1}\to E_\gamma(B_\epsilon^\rho(\varphi(\mathbb S^n))$, where $\gamma>0$ is arbitrary.
Now, we
proceed as in the proof of Lemma 2.5 from \cite{dr}. Fix $\epsilon>0$ and a map $f\colon\mathbb S^n\to M$ with $\mathrm{diam} f(\mathbb S^n)<\delta(\epsilon/2)$. We are going to show that $f$ can be extended to a map $\widetilde f\colon\mathbb B^{n+1}\to M$ such that
$\mathrm{diam} f(\mathbb B^{n+1})\leq 10\epsilon$.
The map $\widetilde f$ will be obtained as limit of a sequence $\{f_k\colon\mathbb B^{n+1}\to P\}$, and this sequence will be constructed by induction together with a sequence of triangulations $\{\tau_k\}$ of $\mathbb B^{n+1}$ such that $f_k(\tau_k^{(n)})\subset M$ for all $k$. To start the induction, we choose $\tau_1$ to be $\mathbb B^{n+1}$, considered as one $(n+1)$-dimensional simplex, and let $\displaystyle f_1\colon\mathbb B^{n+1}\to E_{\eta(\delta(2^{-1}\epsilon)/3)}\big(B^\varrho_{\epsilon}(z)\big)$ be a map extending $f$, where $z\in f(\mathbb S^{n})$. Suppose a triangulation $\tau_k$ of $\mathbb B^{n+1}$ and a map $f_k\colon\mathbb B^{n+1}\to B^\varrho_{\eta(\delta(2^{-k}\epsilon)/3)}(M)$ are already constructed with $f_k(\tau_k^{(n)})\subset M$. Denote by $\tau_{k+1}$ any subdivision of  $\tau_k$ such that $\mathrm{diam} f_k(\sigma)<\delta(2^{-k}\epsilon)/3$ for any simplex $\sigma\in\tau_{k+1}$. According to Claim 1, there exists a map $g_k\colon\tau_{k+1}^{(n)}\to M$ such that $g_k$ is $(\delta(2^{-k}\epsilon)/3)$-close to the restriction $f_k|\tau_{k+1}^{(n)}$ and $g_k|\tau_k^{(n)}=f_k|\tau_k^{(n)}$. Therefore, $\mathrm{diam} g_k(\partial\sigma)<\delta(2^{-k}\epsilon)$ for any $\sigma\in\tau_{k+1}$. Hence, $g_k|\partial\sigma$ can be extended to a map
$g_k^\sigma\colon\sigma\to E_{\eta(\delta(2^{-k-1}\epsilon)/3)}(B_{\epsilon/2^k}^\rho(g_k(\partial\sigma)))$, $\sigma\in\tau_{k+1}$. Finally, we define $f_{k+1}:\mathbb B^{n+1}\to B^\varrho_{\eta(\delta(2^{-k-1}\epsilon)/3)}(M)$, $f_{k+1}|\sigma=g_k^\sigma$. Because $\delta(\epsilon)\leq\epsilon$ and
$\eta(\epsilon)\leq\epsilon$, for every $\sigma\in\tau_{k+1}$ we have
 $\mathrm{diam} f_{k+1}(\sigma)\leq\mathrm{diam} g_k(\partial\sigma)+\epsilon/2^k+\eta(\delta(2^{-k-1}\epsilon)/3)\leq\delta(2^{-k}\epsilon)+\epsilon/2^k+\eta(\delta(2^{-k-1}\epsilon)/3)\leq 3\epsilon/2^k$. Now, for every $x\in\mathbb B^{n+1}$ take $\sigma\in\tau_{k+1}$ containing $x$ and $y\in\partial\sigma$. Then $\varrho(f_k(x),f_{k+1}(x))\leq\varrho(f_{k+1}(x),f_{k+1}(y))+\varrho(f_k(y),f_{k+1}(y))+\varrho(f_k(y),f_{k}(x))$.
 Consequently, $\varrho(f_k(x),f_{k+1}(x))\leq 3\epsilon/2^{k}+2\delta(2^{-k}\epsilon)/3\leq 4\epsilon/2^k$ for all $x\in\mathbb B^{n+1}$. The last inequality implies
 that the sequence $\{f_k(x)\}$ is convergent for all $x\in M$. So, the limit map $\widetilde f\colon\mathbb B^{n+1}\to P$ is well defined and $\varrho(f_1(x),\widetilde f(x))\leq 4\epsilon$. Then for every $x,y\in\mathbb B^{n+1}$ we have
 $\varrho(\widetilde f(x),\widetilde f(y))\leq\varrho(\widetilde f(x),f_1(x))+\varrho(f_1(x),f_1(y))+\varrho(f_1(y),\widetilde f(y))\leq 4\epsilon+\epsilon+\eta(\delta(\epsilon/2)/3)+4\epsilon\leq 10\epsilon$.
 Since $f_k(\mathbb B^{n+1})\subset B^\varrho_{\eta(\delta(2^{-k}\epsilon)/3)}(M)$ and $\lim \eta(\delta(2^{-k}\epsilon)/3)=0$, $\widetilde f(\mathbb B^{n+1})\subset M$. This completes the proof.
\end{proof}

A sequence of open covers $\mathcal U=(\mathcal U_k)_{k\in\mathbb N}$  of a metric space $(M,d)$ is called a {\em zero-sequence} if $\lim_{k\mapsto\infty}\rm{mesh} \mathcal{U}_k=0$.
For any such a sequence we define $\rm{Tel}(\mathcal U)=\bigcup_{k\in\mathbb N} N(\mathcal{U}_k\cup\mathcal{U}_{k+1})$. Here $N(\mathcal{U}_k\cup\mathcal{U}_{k+1})$ is the nerve of $\mathcal{U}_k\cup\mathcal{U}_{k+1}$ with $\mathcal{U}_{k}$ and $\mathcal{U}_{k+1}$ considered as disjoint sets. For any $\sigma\in\rm{Tel}(\mathcal U)$ let $s(\sigma)=\max\{s:\sigma\in N(\mathcal{U}_s\cup\mathcal{U}_{s+1})\}$.

We complete this section by a characterization of metrizable $LC^n$-spaces similar to the characterization of metrizable $ANR$-spaces  provided in \cite{nhu} (see also \cite[Theorem 6.8.1]{sa}).
\begin{pro}
A metric space $(M,d)$ is $LC^n$ if and only if it has a zero-sequence $\mathcal U$ of open covers such that any map $f_0\colon K^{(0)}\to M$ with $f(U)\in U$, $U\in K^{(0)}$, where $K$ is a subcomplex of $\rm{Tel}(\mathcal U)$, extends to a map $f\colon K^{(n+1)}\to M$ satisfying the following condition:
\begin{itemize}
\item[(*)] For any sequence $\{\sigma_k\}$ of simplexes of $K^{(n+1)}$ with $s(\sigma_k)\rightarrow\infty$ we have $\lim_{k\mapsto\infty}\rm{diam}(f(\sigma_k))=0$.
\end{itemize}
\end{pro}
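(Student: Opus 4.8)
The statement is the $(n{+}1)$-skeleton version of Nhu's characterization of metrizable $ANR$'s \cite{nhu} (see also \cite[Theorem 6.8.1]{sa}), and I shall use freely the classical fact, recalled in the Introduction, that a metrizable space is $LC^n$ exactly when it is an absolute neighborhood extensor for metrizable spaces of covering dimension $\le n+1$.

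\emph{Necessity.} Assume $M$ is $LC^n$. Being $LC^0,\dots,LC^n$ simultaneously, for each $x\in M$ and $t>0$ there is $\eta(x,t)\in(0,t]$ such that any map $\mathbb S^j\to B_{\eta(x,t)}(x)$ with $j\le n$ extends to a map $\mathbb B^{j+1}\to B_t(x)$ (take the least of the $n+1$ moduli supplied by $LC^0,\dots,LC^n$, capped by $t$). Fix $\varepsilon_k\downarrow 0$ with $\varepsilon_k\le 2^{-k}$. Choose open covers $\mathcal U_k$ with $\mesh(\mathcal U_k)\le 2^{-k}$, assigning to each $U\in\mathcal U_k$ a point $p_U$ with $U\subset B_{r(U,k)}(p_U)$, where the radii $r(U,k)$ are taken so small --- by a finite downward recursion on the filling dimension $n+1,n,\dots,1$, and small relative to the pointwise moduli $\eta(\cdot,\cdot)$ --- that the following holds: whenever $K\subset\mathrm{Tel}(\mathcal U)$ is a subcomplex and $f_0\colon K^{(0)}\to M$ satisfies $f_0(U)\in U$, one extends $f_0$ successively over $K^{(1)}$ (joining nearby vertices by short arcs, since $M$ is $LC^0$), over $K^{(2)}$, \dots, finally over $K^{(n+1)}$, so that each simplex $\sigma$ with $s(\sigma)=k$ is carried into the ball $B_{\varepsilon_k}(p_\sigma)$ about a point $p_\sigma$ lying in all the vertex-sets of $\sigma$; at the $j$-th step one fills an already-constructed $(j{-}1)$-sphere of suitably small diameter by means of the local modulus $\eta(p_\sigma,\cdot)$. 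Distinct simplices are treated compatibly since they are glued along faces handled earlier, so this yields an extension $f\colon K^{(n+1)}\to M$ of $f_0$, and the estimate $\diam f(\sigma)\le\varepsilon_{s(\sigma)}$ is precisely $(*)$.

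\emph{Sufficiency.} Let $\mathcal U$ be a zero-sequence with the stated property. By the characterization recalled above it suffices to extend an arbitrary map $g_0\colon A\to M$, with $A$ closed in a metrizable space $Z$ of dimension $\le n+1$, over a neighborhood of $A$. Fix a metric on $Z$. Using the continuity of $g_0$ at points of $A$, choose a thin neighborhood $O$ of $A$ and a locally finite open cover $\mathcal W$ of $O\setminus A$ of order $\le n+2$ (so $\dim N(\mathcal W)\le n+1$), refining $\{B_{\dist(z,A)/5}(z):z\in O\setminus A\}$, stratified by the distance to $A$ in the sense that each $W\in\mathcal W$ carries a level $\ell(W)$ with $\dist(W,A)$ comparable to $2^{-\ell(W)}$, members of different levels meet only at consecutive levels, and $\mathcal W$ is so fine that to each $W$ one can attach $a_W\in A$ near $W$ and $U_W\in\mathcal U_{\ell(W)}$ with $g_0(a_W)\in U_W$ so that $W\mapsto U_W$ is a simplicial map of $N(\mathcal W)$ into $\mathrm{Tel}(\mathcal U)$ --- this is possible because toward $A$ the levels $\ell(W)$ tend to infinity and the relevant portions of $M$ shrink, so the local choices can be made coherently exactly as in the $ANR$ case. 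Let $K$ be the image subcomplex, pick any $f_0\colon K^{(0)}\to M$ with $f_0(U)\in U$, and let $f\colon K^{(n+1)}\to M$ be an extension of $f_0$ with property $(*)$, furnished by hypothesis. Define $\widetilde g$ on $O$ by $\widetilde g|_A=g_0$ and on $O\setminus A$ by $f$ composed with the simplicial map $N(\mathcal W)\to K$ restricted to the $(n{+}1)$-skeleton (which receives the canonical barycentric map $O\setminus A\to N(\mathcal W)$, as $\dim N(\mathcal W)\le n+1$). Then $\widetilde g$ is continuous: if $z\to a\in A$ then $\dist(z,A)\to 0$, the carrier of the image of $z$ is a simplex $\sigma$ with $s(\sigma)\to\infty$, hence $\diam f(\sigma)\to 0$ by $(*)$; and $\sigma$ has a vertex $U_W$ with $W$ close to $z$, so $f_0(U_W)\in U_W\ni g_0(a_W)$ with $a_W\to a$, whence $\widetilde g(z)\to g_0(a)$.

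In both directions the delicate point is a uniformity issue: $M$ need not be uniformly locally $n$-connected, so in the necessity part the radii $r(U,k)$ must be shrunk relative to the \emph{pointwise} moduli $\eta(\cdot,\cdot)$ rather than uniformly, and in the sufficiency part the mesh of $\mathcal W$ near $A$ must be shrunk relative to the \emph{pointwise} modulus of continuity of $g_0$ at points of $A$, so that $W\mapsto U_W$ is genuinely simplicial while still $\ell(W)\to\infty$ toward $A$. Carrying out this bookkeeping is the one step that is not merely formal; it proceeds exactly as in the proof of the corresponding statement for metrizable $ANR$'s, with ``full skeleton'' replaced by ``$(n{+}1)$-skeleton'' and ``arbitrary metrizable space'' by ``metrizable space of dimension $\le n+1$''.
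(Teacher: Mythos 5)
Your sufficiency argument follows the same route as the paper (Nhu's implication (iii)$\Rightarrow$(i) run with a nerve of dimension $\le n+1$, continuity at $A$ coming from $(*)$), but your necessity argument is a from-scratch skeletal construction, and its crux does not work as described. You propose to fill a $j$-simplex $\sigma$ into the ball $B_{\varepsilon_{s(\sigma)}}(p_\sigma)$ using the pointwise modulus $\eta(p_\sigma,\cdot)$, where $p_\sigma\in\bigcap_{U\in\sigma}U$. For this you need the already-built image of $\partial\sigma$ to lie within $\eta(p_\sigma,\varepsilon_{s(\sigma)})$ of $p_\sigma$. But $p_\sigma$ is only determined a posteriori by the simplex (hence by the arbitrary subcomplex $K$), whereas the radii $r(U,k)$ must be fixed when the covers are built; and for a non--locally-compact $LC^n$ space one can have $\inf_{y\in U}\eta(y,t)=0$ on every nonempty open $U$, so no advance choice of $r(U,k)$ "relative to the pointwise moduli" at the centers $p_U$ controls the modulus at $p_\sigma$. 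The correct mechanism is cover-level, not pointwise: for every open cover $\alpha$ there is a refinement $\beta$ such that every map of $\mathbb S^j$, $j\le n$, into a member of $\beta$ bounds in a member of $\alpha$; one then builds each $\mathcal U_k$ as the last term of a tower of $n+2$ such refinements interleaved with star-refinements (so that the union of the filled faces of $\partial\sigma$ sits inside a single member of the next cover up), and additionally arranges $\mathcal U_{k+1}$ to refine the whole tower attached to $\mathcal U_k$, since simplices of $\mathrm{Tel}(\mathcal U)$ mix two consecutive levels. This is the classical full-realization argument for $LC^n$ spaces; your closing claim that the bookkeeping "proceeds exactly as in the $ANR$ case" is not accurate, because Nhu's $ANR$ proof runs through Dugundji's extension into a convex subset of a normed space, a tool with no $LC^n$ analogue.

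It is worth noting that the paper sidesteps this entire difficulty, which is the point of Section 2: it embeds $M$ as a closed subset of a metric $ANR$ $P$, applies Nhu's theorem to $P$ to get a zero-sequence $\mathcal V$ for $P$ and a full extension $g\colon|K|\to P$, and then pushes $g$ back into $M$ on the $(n+1)$-skeleton by lifting through the perfect $(n+1)$-invertible map $p\colon Y_P\to P$ with $\dim Y_P\le n+1$ (Theorem 2.1) and composing with the retraction $q\colon Y_P\to M$; the restriction to $K^{(n+1)}$ is exactly what makes the lift exist, and condition $(*)$ is verified by comparing $f$ with $g$ via the auxiliary sets $W(U)=V\setminus p(q^{-1}(M\setminus U))$. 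If you want to keep your direct construction, you must either import the partial-realization theorem for metric $LC^n$ spaces (Hu, Ch.~V) or the Dugundji--Michael uniformization of the local connectivity data; as written, the necessity half has a genuine gap.
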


\begin{proof}
Suppose $M$ is $LC^n$ and embed $(M,d)$ isometrically in a metric $ANR$-space $(P,\rho)$ as a closed subset. According to the proof of Theorem 2.1, there are metrizable space $Y_P$ and two maps $p\colon Y_P\to P$ and $q\colon Y_p\to M$ such that $\dim Y_P\leq n+1$, $p$ is $(n+1)$-invertible and $q$ extends the map
$p|p^{-1}(M)$.
Using the proof of Nhu's theorem \cite[Theorem 1.1]{nhu} for $ANR$'s, we can find a zero-sequence $\mathcal V=(\mathcal V_k)_{k\in\mathbb N}$ of $P$ such that 
any map $h_0\colon K^{(0)}\to P$ with $h_0(V)\in V$ for each $V\in K^{(0)}$, where $K$ is a subcomplex of $\rm{Tel}(\mathcal U)$, extends to a map $h\colon|K|\to P$ such that $\lim_{k\mapsto\infty}\rm{diam}(h(\sigma_k))=0$ for any sequence $\{\sigma_k\}$ of simplex of $K$ with $s(\sigma_k)\rightarrow\infty$. Let us show that the sequence  $(\mathcal U_k)_{k\in\mathbb N}$, $\mathcal U_k=\{V\cap M:V\in\mathcal V_k\}$, is as required. Indeed, for each $U=V\cap M\in\mathcal U_k$ define $W(U)=V\backslash(p(q^{-1}(M\backslash U)))$ and consider the open families $\mathcal W_k=\{W(U):U\in\mathcal U_k\}$, $k\in\mathbb N$. We may assume that each $U$ is a proper subset of $M$, so
$W_k\neq\varnothing$ for all $k$. Note that $\mathcal W_k$ may not cover $P$, but any $\mathcal W_k$ covers $M$. Moreover, $\rm{mesh}\mathcal{W}_k\leq\rm{mesh} \mathcal{V}_k$, so $\lim_{k\mapsto\infty}\rm{mesh} \mathcal{W}_k=0$. If $K$ is a subcomplex of $\rm{Tel}(\mathcal U)$, take any map $f_0\colon K^{(0)}\to M$ with $f_0(U)\in U$. Hence, $f_0$ extends to a map $g\colon|K|\to P$ such that $\lim_{k\mapsto\infty}\rm{diam}(g(\sigma_k))=0$ for any sequence $\{\sigma_k\}$ of simplexes of $K$ with $s(\sigma_k)\rightarrow\infty$. Finally, let $f\colon|K^{(n+1)}|\to M$ be the map
$q\circ\widetilde{g}$, where $\widetilde{g}\colon|K^{(n+1)}|\to Y_P$ is a lifting of $g$. To show that $f$ satisfies condition $(*)$, fix an $\epsilon>0$, a cover $\mathcal W_m$ with $\rm{mesh}(\mathcal{W}_m)<\epsilon$ and a sequence $\{\sigma_k\}\subset K^{(n+1)}$ with $\lim_{k\mapsto\infty}s(\sigma_k)=\infty$.
Then $\lim_{k\mapsto\infty}\rm{mesh}(g(\sigma_k))=0$, so there exists $k_0$ such that
$g(\sigma_k)\subset\bigcup\mathcal W_m$ for all $k\geq k_0$ (recall that $g(U)\in M$ for all $U\in K^{(0)}$).
Hence, for any two different points $x,y\in\sigma_k$ with $k\geq k_0$ we have $g(x)\in W(U_x)$ and $g(y)\in W(U_y)$ for some $W(U_x),W(U_y)\in\mathcal W_m$. So, according to the definition of $W(U)$, $f(x)\in U_x$ and $f(y)\in U_y$. Therefore,
$\rho(f(x),g(x))\leq\rm{diam}(W(U_x))<\epsilon$ and, similarly, $\rho(f(y),g(y))<\epsilon$. Consequently, $d(f(x),f(y)<2\cdot\epsilon+\rm{diam}(g(\sigma_k))$.

To prove the other implication, embed $M$ as a closed subset of a metrizable space $Z$ with $\dim Z\backslash M\leq n+1$ and follow the proof of
implication $(iii)\Rightarrow (i)$ from \cite[Theorem 1.1]{nhu} to obtain that $M$ is a retract of a neighborhood $W_1$ of $M$ in $Z$ (the only difference is that in Fact 1.2 from \cite{nhu} we take the cover $\mathcal V$ of $W_1\backslash M$ to be of order $\leq n+1$, so the nerve $N(\mathcal V)$ is a complex of dimension $\leq n+1$). Then by \cite[chapter V, Theorem 3.1]{hu}, $M$ is $LC^n$.
\end{proof}

\section{$UV^n$-maps and $ALC^n$-spaces}

In this section we provide spectral characterizations of non-metrizable $ALC^n$-compacta and cell-like compacta.
Recall that a map $f\colon X\to Y$ between compact spaces is said to be soft \cite{s76} if for every compactum $Z$, its closed subset $A\subset Z$ and maps
$h\colon A\to X$ and $g\colon Z\to Y$ with $f\circ h=g|A$ there exists a lifting $\overline{g}\colon Z\to X$ of $g$ extending $h$.
\begin{thm}\label{alc}
A compactum $X$ is $ALC^n$ if and only if $X$ is the limit space of a $\sigma$-complete inverse system $\displaystyle S=\{X_\alpha, p^{\beta}_\alpha, \alpha<\beta<\tau\}$ consisting of compact metrizable $LC^n$-spaces $X_\alpha$ such that all bonding projections $p^{\beta}_\alpha$, as a well all limit projections $p_\alpha$, are $UV^n$-maps.
\end{thm}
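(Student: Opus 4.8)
My plan is to establish the two implications separately, using Shchepin's spectral theorem \cite{s76} for the hard direction and the transfer machinery of Section~2 throughout. First I would fix an embedding of the compactum $X$ as a closed subset of the Tychonoff cube $[0,1]^{\tau}$, $\tau=w(X)$, and recall that for compact $X$ the property ``$X$ is $ALC^n$'' coincides with ``$X$ is $ALC^n$ in $[0,1]^{\tau}$''. I would also record three standard facts about $UV^n$-maps between compacta, to be used freely: (a) a composition of $UV^n$-maps is $UV^n$; (b) a $UV^n$-image of a $UV^n$-compactum is $UV^n$; and (c) if $f\colon Y\to Z$ and $g\colon Z\to T$ are surjections with $f$ and $g\circ f$ being $UV^n$, then $g$ is $UV^n$ --- because over each $t\in T$ the restriction $f\colon(g\circ f)^{-1}(t)\to g^{-1}(t)$ is a $UV^n$-map with $UV^n$ domain, so by (b) its range $g^{-1}(t)$ is $UV^n$.

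For $(\Leftarrow)$, suppose $X$ is the limit space of $S$ as in the statement. Since the $ALC^n$-requirement for a pair $(x,U)$ follows from the same requirement for any smaller neighborhood $U'\subset U$ (given an open extension $\widetilde U$ of $U$, the set $\widetilde U\setminus(X\setminus U')$ is an open extension of $U'$ lying inside $\widetilde U$), it is enough to treat basic neighborhoods $U=p_{\alpha}^{-1}(U_{\alpha})$. Here $X_{\alpha}$ is a complete metrizable $LC^n$-space, hence $ALC^n$ by Theorem~2.7, which yields a neighborhood $V_{\alpha}$ of $p_{\alpha}(x)$ in $X_{\alpha}$; I would then put $V=p_{\alpha}^{-1}(V_{\alpha})$. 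Given an open extension $\widetilde U\supset U$ in $[0,1]^{\tau}$, I would produce the extension $\widetilde V\supset V$ by exploiting that every map of $\mathbb S^{k}$ or $\mathbb B^{k+1}$ ($k\le n$) into $[0,1]^{\tau}$ has separable image, hence factors through a countable subproduct which, by $\sigma$-completeness of $S$, is dominated by some $X_{\gamma}$ with $\gamma\ge\alpha$; one then lifts such maps along the $UV^n$-maps $p^{\gamma}_{\alpha}$ and $p_{\gamma}$, contracts them by the $LC^n$-property of $X_{\gamma}$, and reassembles and takes a limit as in the inductive construction in the proof of Theorem~2.7, keeping the null-homotopies inside $\widetilde U$. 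The subtle point is that the single $\widetilde V$ must serve all spheres at once; I would handle this through an approximate-lifting description of $UV^n$-maps formulated with a zero-sequence of covers, much as in Proposition~2.8.

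For $(\Rightarrow)$, suppose $X$ is $ALC^n$, and represent it as the limit of the canonical $\sigma$-complete (indeed continuous) inverse system of the metrizable compacta $X_{A}=\pi_{A}(X)$ indexed by the countable subsets $A$ of $\tau$, with the coordinate projections as bonding and limit maps. By Shchepin's spectral theorem it suffices to find a cofinal, $\sigma$-complete family $\mathcal A$ of indices such that, for $A\in\mathcal A$, the space $X_{A}$ is $LC^n$ and the projection $\pi_{A}\colon X\to X_{A}$ is $UV^n$; restricting $S$ to $\mathcal A$ then gives a $\sigma$-complete system of the required type, its bonding maps being $UV^n$ by fact~(c) and its limit still $X$. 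I would build $\mathcal A$ by the standard closing-off procedure: starting from an arbitrary countable $A_{0}\subset\tau$, enlarge it in $\omega$ steps, absorbing at each step the countably many coordinates that the $ALC^n$-data of $X$ in $[0,1]^{\tau}$ produces when one tries to verify the intrinsic $LC^n$-condition of the current factor and the $UV^n$-condition of the fibres of the current projection --- here the finite-dimensional resolutions of Theorem~2.1 and the liftings of Section~2 are what convert the $ALC^n$-data into homotopies living inside the factors. The union $A=\bigcup_{i}A_{i}$ then works, and the collection of all such $A$ is cofinal and closed under countable unions.

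I expect the main obstacle to be exactly this closing-off step of $(\Rightarrow)$: the $ALC^n$-property of $X$ is phrased in terms of arbitrary open extensions in the ambient cube, whereas the conditions to be installed on a factor --- that $X_{A}$ be intrinsically $LC^n$, and that each fibre $\pi_{A}^{-1}(y)$ be $UV^n$ in $[0,1]^{\tau}$ --- are not of cylindrical form. Reconciling them forces one to pass repeatedly between a general neighborhood of a compactum in $[0,1]^{\tau}$ and the cylindrical neighborhoods furnished by the projections, all the while confining the relevant maps of $\mathbb B^{k+1}$, $k\le n$, to the sets where contractibility is needed; getting this interplay to consume only countably many new coordinates per stage is the delicate part of the proof.
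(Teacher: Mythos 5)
Your overall architecture---embed $X$ in a Tychonoff cube, invoke Shchepin's spectral theorem, prove the ``if'' direction by working inside one metrizable factor and lifting back, and prove the ``only if'' direction by a closing-off argument producing a cofinal $\sigma$-closed family of good countable index sets---is exactly the paper's. But in each direction the step you yourself flag as ``subtle''/``delicate'' is the actual content of the proof, and the proposal does not supply it. For $(\Leftarrow)$: the paper does not lift individual spheres through countable subproducts or take limits of approximate liftings. It chooses $\widetilde V$ to be a \emph{cylinder}: having fixed $\alpha_0$ with $V_0\subset U_0\subset X_{\alpha_0}$ realizing the $LC^n$-condition and a finite union $W$ of basic open sets with $\overline{p_{\alpha_0}^{-1}(U_0)}\subset W\subset\widetilde U$, it picks $\alpha_1>\alpha_0$ making $W$ a $q_{\alpha_1}$-cylinder, applies Proposition 2.3 to get an $ANR$-neighborhood $G$ of $V_1=(p^{\alpha_1}_{\alpha_0})^{-1}(V_0)$ in $\mathbb I^{B_{\alpha_1}}$ from which $n$-dimensional maps can be homotoped into $V_1$, and sets $\widetilde V=q_{\alpha_1}^{-1}(G)$. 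A map $\mathbb S^k\to\widetilde V$ is then handled entirely downstairs: its projection is pushed into $V_1$, killed in $U_1$ because the $UV^n$ bonding map induces (via Proposition 2.5 and Dugundji's Vietoris theorem) isomorphisms $\pi_k(V_1)\cong\pi_k(V_0)$ and $\pi_k(U_1)\cong\pi_k(U_0)$, and the null-homotopy is lifted back in one stroke by the \emph{softness} of $q_{\alpha_1}$. Your substitute---``contract by the $LC^n$-property of $X_\gamma$''---does not specify in which set the contraction lives; $LC^n$-ness of a factor alone does not place the null-homotopy inside $\widetilde U$, and it is precisely the transfer of the homotopy-triviality of $V_0\hookrightarrow U_0$ up the spectrum where the $UV^n$ hypothesis on the bonding maps must be used. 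Moreover, a map of $\mathbb S^k$ into $[0,1]^{\tau}$ does not ``factor through a countable subproduct'' in any sense that helps here; what saves the day is that $\widetilde V$ itself is cylindrical.

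For $(\Rightarrow)$ you correctly diagnose the obstruction---the $ALC^n$-data is indexed by arbitrary open extensions in the cube, while a closing-off step can only record countably many conditions---but you leave it unresolved. The paper's device is to restrict attention to extensions of the canonical form $[W_0,\dots,W_k]_A=q_A^{-1}\big((\bigcup_{i=1}^{k}W_i)\setminus(X_A\setminus W_0)\big)$ indexed by finite tuples from the countable standard base of $\mathbb I^{A}$, to record only the countable families $\gamma$ and $\widetilde{\gamma}$ of $\lambda$-data for those (countability coming from Lindel\"ofness of functionally open sets), and then to check in Claims 3 and 4 that an arbitrary extension can always be shrunk to a canonical one, again using softness of $q_A$ to lift spheres from $X_A$ back to $X$. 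Note also that the paper does not install ``$X_A$ is $LC^n$'' directly: it proves $X_A$ is $ALC^n$ and only then invokes Theorem 2.7 for metric compacta. Finally, your ``standard facts'' (a)--(c) about $UV^n$-maps are themselves nontrivial (they essentially require the $n$-shape or pro-homotopy machinery); the paper sidesteps them by proving the $UV^n$-property of \emph{all} bonding and limit projections directly from admissibility (Claim 4 applies to every pair of admissible sets, including $A\subset B$). As it stands, the proposal is a correct table of contents for the paper's proof rather than a proof.
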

\begin{proof}
Suppose that $X$ is the limit space of an inverse system $\displaystyle S=\{X_\alpha, p^{\beta}_\alpha, \alpha<\beta<\tau\}$ such that each $X_\alpha$ is a metric $LC^n$-compactum and all $p^{\beta}_\alpha$ are $UV^n$-maps. We embed $X$ in a Tychonoff cube $\mathbb I^B$, where $\mathbb I=[0,1]$ and the cardinality of $B$ is equal to $\tau$.  According to Shchepin's spectral theorem \cite{s76}, we can assume that $B$ is the union of countable sets $B_\alpha$, $\alpha\in A$, such that $B_\alpha\subset B_\beta$ for $\alpha<\beta$, $B_\gamma=\bigcup\{B_{\gamma(k)}:k=1,2,..\}$ for any chain
$\gamma(1)<\gamma(2)<..$ with $\gamma=\sup\{\gamma(k):k\geq 1\}$, and each $p^{\beta}_\alpha\colon X_\beta\to X_\alpha$ is the restriction of the projection  $q^{\beta}_\alpha\colon\mathbb I^{B_\beta}\to\mathbb I^{B_\alpha}$. So, each $X_\alpha=q_\alpha(X)$ is a subset of $\mathbb I^{B_\alpha}$, where $q_\alpha$ denotes the projection $q_\alpha\colon\mathbb I^B\to\mathbb I^{B_\alpha}$. We also denote $q_\alpha|X$ by $p_\alpha$.
Choose $x_0\in X$ and its neighborhood $U\subset X$. There exists $\alpha_0<\tau$ and an open set $U_0\subset X_{\alpha_0}$ with
$x_0\in p^{-1}_{\alpha_0}(U_0)\subset\overline{p^{-1}_{\alpha_0}(U_0)}\subset U$. Since $X_{\alpha_0}$ is $LC^n$, there exists a neighborhood $V_0$ of $x_{\alpha_0}=p_{\alpha_0}(x_0)$ such that for all $k\leq n$ the inclusion $j_0:V_0\hookrightarrow U_0$ generates trivial homomorphisms $j_0^*:\pi_k(V_0)\to\pi_k(U_0)$ between the homotopy groups. Let $V=p^{-1}_{\alpha_0}(V_0)$ and $\widetilde{U}$ be an open set in $\mathbb I^B$ extending $U$.
Choose a finite family $\omega=\{W_1,..,W_m\}$ of open sets from the ordinary base of $\mathbb I^B$ such that $W=\bigcup_{i=1}^{i=m}W_i$ covers $\overline{p^{-1}_{\alpha_0}(U_0)}$ and $W\subset\widetilde{U}$. Then we can find $\alpha_1>\alpha_0$ with $q^{-1}_{\alpha_1}(q_{\alpha_1}(W_i))=W_i$, $i\leq m$.
Denote $V_1=(p^{\alpha_1}_{\alpha_0})^{-1}(V_0)$ and $U_1=(p^{\alpha_1}_{\alpha_0})^{-1}(U_0)$. Obviously, $q_{\alpha_1}(W)$ is open in $\mathbb I^{B_{\alpha_1}}$ containing $U_1$. Take an open in $\mathbb I^{B_{\alpha_1}}$ extension $\widetilde{V_1}$ of $V_1$ with $\widetilde{V_1}\subset q_{\alpha_1}(W)$. Since $\widetilde{V_1}$ is an $ANR$ and $V_1$ is $LC^n$ (as an open subset of $X_{\alpha_1}$), by Proposition 2.3, there exists an open in $\mathbb I^{B_{\alpha_1}}$ extension $G$ of $V_1$ which is contained in $\widetilde{V_1}$ with the following property: for every map $h\colon Z\to G$, where $Z$ is at most $n$-dimensional metric space, there exists a map $h_1\colon Z\to V_1$ such that $h$ and $h_1$ are homotopic in $\widetilde{V_1}$.
Finally, let $\widetilde{V}=q^{-1}_{\alpha_1}(G)$. It is easily seen that $\widetilde{V}$ is an open extension of $V$ and $\widetilde{V}\subset W\subset\widetilde{U}$. Consider a map $f\colon\mathbb S^k\to\widetilde{V}$, where $k\leq n$. Then there exists a map $g\colon\mathbb S^k\to V_1$ such that $q_{\alpha_1}\circ f$ and $g$ are homotopic in $\widetilde{V_1}$. We are going to show that $g$ is homotopic to a constant map in the set $U_1$. This will be done if the inclusion $j_1\colon V_1\hookrightarrow U_1$ generates a trivial homomorphism $j_1^*:\pi_k(V_1)\to\pi_k(U_1)$.
To this end, we consider the following commutative diagram:
{ $$
\begin{CD}
\pi_k(V_1)@>{{j_1^*}}>>\pi_k(U_1)\\
@ VV{(p^{\alpha_1}_{\alpha_0})^*}V
@VV{(p^{\alpha_1}_{\alpha_0})^*}V\\
\pi_k(V_0)@>{{j_0^*}}>>\pi_k(U_0)
\end{CD}
$$}\\

Since $X_{\alpha_1}$ is $LC^n$ and the map $p^{\alpha_1}_{\alpha_0}$ is $UV^n$, each fiber of $p^{\alpha_1}_{\alpha_0}$ is an $UV^n$-set in 
$X_{\alpha_1}$, see Proposition 2.5. Then, according to \cite[Theorem 5.3]{du}, both vertical homomorphisms from the above diagram are isomorphisms.
This implies that $j_1^*$ is trivial because so is $j_0^*$. Hence, $q_{\alpha_1}\circ f$ is homotopic to a constant map in the set $\widetilde{V_1}\cup U_1\subset q_{\alpha_1}(W)$ (recall that $q_{\alpha_1}\circ f$ is homotopic to $g$ in $\widetilde{V_1}$ and $g$ is homotopic to a constant map in $U_1$). Therefore, there is a map $f_1\colon\mathbb B^{n+1}\to q_{\alpha_1}(W)$ extending $q_{\alpha_1}\circ f$. Since $q_{\alpha_1}$ is a soft map, and $W=q^{-1}_{\alpha_1}(q_{\alpha_1}(W))$,  $f$ can be extended to a map $\tilde{f}\colon\mathbb B^{n+1}\to W$. Thus, the inclusion $\widetilde{V}\hookrightarrow\widetilde{U}$ generates a trivial homomorphism between $\pi_k(\widetilde{V})$ and
$\pi_k(\widetilde{U})$ for any $k\leq n$. So, $X$ is $ALC^n$.

Now, suppose $X$ is $ALC^n$, and consider $X$ as a subset of some $\mathbb I^B$. Since the sets $V$ and $\widetilde{V}$ in the $ALC^n$ definition depend on the point $x$, the set $U$ and its open extension $\widetilde{U}$, respectively, we use the notations $\lambda(x,U)=V$ and $\lambda(x,U,\widetilde{U})=\widetilde{V}$. First, we show that the sets $V$ and $\widetilde{V}$  can be chosen to be functionally open in $X$ and in $\mathbb I^B$, respectively. Indeed, if $U\subset X$ is a neighborhood of $x\in X$,  we take a functionally open in $X$ neighborhood $V^*$ of $x$ with $V^*\subset\lambda(x,U)$. Then for a given open in $\mathbb I^B$ extension $\widetilde{U}$ of $U$ and every $y\in V^*$ choose a functionally open in $\mathbb I^B$ neighborhood $G(y)$ of $y$ with  $G(y)\subset\lambda(x,U,\widetilde{U})\cap G$, where $G$ is an open in $\mathbb I^B$ extension of $V^*$. Since $V^*$, as a functionally open subset of $X$, is Lindel\"{o}f, there exist  countably many sets $G(y_i)$ whose union covers $V^*$. Obviously the set $\widetilde{G}=\bigcup_{i=1}^{\infty}W(y_i)$ is a functionally open in $\mathbb I^B$ extension of $V^*$ which is contained in $\lambda(x,U,\widetilde{U})$. So, every map from $\mathbb S^k$ to $\widetilde{G}$, $k\leq n$, is homotopic in $\widetilde{U}$ to a constant map. 

Therefore, for every open set $U\subset X$ and every $x\in U$ there exists a functionally open in $X$ set $V=\lambda(x,U)$ such that for any open in $\mathbb I^B$ extension $\widetilde{U}$ of $U$ we can find a functionally open in $\mathbb I^B$ extension $\lambda(x,U,\widetilde{U})$  of $V$  contained in $\widetilde{U}$ with all homomorphisms $\pi_k(\lambda(x,U,\widetilde{U})\to\pi_k(\widetilde{U})$, $k\leq n$, being trivial.
If $U$ is functionally open in $X$, then it is Lindel\"{o}f and there are countably many $x_i\in U$ such that $\{\lambda(x_i,U):i=1,2,..\}$ is a cover of $U$. We fix such a countable cover $\gamma(U)$ for any functionally open set $U\subset X$.

Let $A\subset B$ and $W_0,W_1,..,W_k$ be elements of the standard open base $\mathcal B_A$ for $\mathbb I^A$ such that
\begin{itemize}
\item[$(w)$] $\varnothing\neq\overline{W_0}\cap X_A\subset\bigcup_{i=1}^{i=k}W_i$.
\end{itemize}
Here, $X_A=q_A(X)$, where $q_A:\mathbb I^B\to\mathbb I^A$ is the projection.
We also denote by $[W_0,W_1,..,W_k]_A$ the set $q_A^{-1}\big((\bigcup_{i=1}^{i=k}W_i)\backslash(X_A\backslash W_0)\big)$. Observe that
$[W_0,W_1,..,W_k]_A\cap X=q_A^{-1}(W_0)\cap X$, so $[W_0,W_1,..,W_k]_A\cap X$ is functionally open in $X$.
Moreover, if $\gamma(q_A^{-1}(W_0)\cap X)=\{V(y_i):i=1,2,..\}$, where $y_i\in q_A^{-1}(W_0)\cap X$ for all $i$, we consider the sets
$\widetilde{V(y_i)}=\lambda(y_i,q_A^{-1}(W_0)\cap X,[W_0,W_1,..,W_k]_A)$. So, each $\widetilde{V(y_i)}$ is a functionally open extension of $V(y_i)$ and all homomorphisms $\pi_k(\widetilde{V(y_i)})\to\pi_k([W_0,W_1,..,W_k]_A)$, $k\leq n$, are trivial. Denote the family
$\{\widetilde{V(y_i)}:i=1,2,..\}$ by $\widetilde{\gamma}([W_0,W_1,..,W_k]_A)$.
We say that a set $A\subset B$ is admissible if the following holds:
\begin{itemize}
\item $q_A^{-1}(q_A(V))=V$ for all $V\in\widetilde{\gamma}([W_0,W_1,..,W_k]_A)$ and all finitely many  elements $W_0,W_1,..,W_k$ of $\mathcal B_A$ satisfying condition $(w)$;
\item $p_A^{-1}(p_A(V))=V$ for all $V\in\gamma(q_A^{-1}(W)\cap X)$ and $W\in\mathcal B_A$, where $p_A\colon X\to X_A$ is the restriction $q_A|X$.
\end{itemize}
Recall that for any functionally open set $U$ in $\mathbb I^B$ (resp., in $X$) there is a countable set $s(U)\subset B$ such that $q_{s(U)}^{-1}(q_{s(U)}(U))=U$  (resp., $p_{s(U)}^{-1}(p_{s(U)}(U))=U$).

\smallskip
\textit{Claim $2$. For any set $A\subset B$ there exists an admissible set $C\subset B$ of cardinality $|A|.\aleph_0$ containing $A$}.

\smallskip
 We construct by induction sets $A=A_0\subset A_1\subset...\subset A_k\subset A_{k+1}\subset..$ of cardinality $|A|.\aleph_0$ such that:
\begin{itemize}
\item $s(V)\subset A_{k+1}$ for all $V\in\gamma(q_{A_k}^{-1}(W)\cap X)$ and all $W\in\mathcal B_{A_k}$.
\item $s(V)\subset A_{k+1}$ for all $V\in\widetilde{\gamma}([W_0,W_1,..,W_m]_{A_k})$ and all finitely many $W_0,W_1,..,W_m\in\mathcal B_{A_k}$ satisfying condition $(w)$.
\end{itemize}
The construction follows from the fact that the cardinality of each base $\mathcal B_{A_k}$ is $|A|.\aleph_0$ and the families $\gamma(q_{A_k}^{-1}(W)\cap X)$
and $\widetilde{\gamma}([W_0,W_1,..,W_m]_{A_k})$ are countable provided $W, W_0,..,W_k\in\mathcal B_{A_k}$. It is easily seen that the set $C=\bigcup_{k=1}^{\infty}A_k$ is as required.

\smallskip
\textit{Claim $3$. $X_A$ is an $ALC^n$-space for every admissible set $A\subset B$}.

\smallskip
Let $y\in U$, where $U$ is open in $X_A$. Take $x\in X$ and $W_0\in\mathcal B_{A}$ containing $y$  such that $\overline{W_0}\cap X_A\subset U$ and $q_A(x)=y$.
Then $x$ belongs to some $V_x\in\gamma(q_A^{-1}(W_0)\cap X)$. Since $s(V_x)\subset A$ (recall that $A$ is admissible), $p_{A}^{-1}(p_{A}(V_x))=V_x$.
So, $V=p_A(V_x)$ is a functionally open in $X_A$ neighborhood of $y$, which is contained in $U$. Take any open extension $\widetilde{U}\subset\mathbb I^A$ of $U$, and finitely many $W_1,..,W_k\in\mathcal B_A$ satisfying $\overline{W_0}\cap X_A\subset\bigcup_{i=1}^{i=k}W_i\subset\widetilde{U}$.
Since $V_x\in\gamma(q_A^{-1}(W_0)\cap X)$, the set $\widetilde{V_x}=\lambda(x,q_A^{-1}(W_0)\cap X,[W_0,W_1,..,W_k]_A)$ is a functionally open extension of $V_x$  with $\widetilde{V_x}\in\widetilde{\gamma}([W_0,W_1,..,W_k]_{A})$. Then $s(\widetilde{V_x})\subset A$ and $\widetilde{V}=q_A(\widetilde{V_x})$ is a functionally open in $\mathbb I^A$ extension of $V$. We are going to show that all homomorphisms $\pi_k(\widetilde{V})\to\pi_k(\widetilde{U})$, $k\leq n$, are trivial. Indeed, every map $f\colon\mathbb S^k\to\widetilde{V}$ can be lifted to a map $g\colon\mathbb S^k\to\widetilde{V_x}$ because $q_A^{-1}(\widetilde{V})=\widetilde{V_x}$ and $q_A$ is a soft map. Recall that $\widetilde{V_x}$ belongs to $\widetilde{\gamma}([W_0,W_1,..,W_k]_{A})$, so $g$ can be extended to a map $\widetilde{g}:\mathbb B^{k+1}\to [W_0,W_1,..,W_k]_{A}$. Finally, $q_A\circ\widetilde{g}:\mathbb B^{k+1}\to q_A([W_0,W_1,..,W_k]_{A})\subset\widetilde{U}$ is an extension of $f$. This completes the proof of Claim 3.

\smallskip
\textit{Claim $4$. Let $A_2\subset A_1$ be admissible subsets of $B$. Then each fiber of $p^{A_1}_{A_2}\colon X_{A_1}\to X_{A_2}$ is $UV^n$}.

\smallskip
Let $x\in X_{A_2}$ and $U\subset\mathbb I^{A_1}$ be an open set containing $F=(p^{A_1}_{A_2})^{-1}(x)$. Take $W_0'\in\mathcal B_{A_2}$ with $x\in W_0'$ and
$(p^{A_1}_{A_2})^{-1}(\overline{W_0'}\cap X_{A_2})\subset U$. So, $(q^{A_1}_{A_2})^{-1}(\overline{W_0'})\cap X_{A_1}\subset U$. Next, choose
$W_1,..,W_k\in\mathcal B_{A_1}$ such that $(q^{A_1}_{A_2})^{-1}(\overline{W_0'})\cap X_{A_1}\subset\bigcup_{i=1}^{i=k}W_i\subset U$. Obviously,
$W_0=(q^{A_1}_{A_2})^{-1}(W_0')\in\mathcal B_{A_1}$ and $\overline{W_0}\cap X_{A_1}\subset\bigcup_{i=1}^{i=k}W_i$. Let $y\in X$ with $q_{A_2}(y)=x$. Then $y$ belongs to some $V_y\in\gamma(q_{A_1}^{-1}(W_0)\cap X)$. Because $q_{A_1}^{-1}(W_0)=q_{A_2}^{-1}(W_0')$ and $A_2$ is admissible, we have
$s(V_y)\subset A_2$. Hence, $p_{A_2}^{-1}(p_{A_2}(V_y))=V_y$ and $p_{A_2}^{-1}(x)=p_{A_1}^{-1}(F)\subset V_y$. Then $\widetilde{V_y}=\lambda(y,q_A^{-1}(W_0)\cap X,[W_0,W_1,..,W_k]_{A_1})$ is a functionally open extension of $V_y$ and $\widetilde{V_y}\in\widetilde{\gamma}([W_0,W_1,..,W_k]_{A_1})$. Because $s(\widetilde{V_y})\subset A_1$, $q_{A_1}^{-1}(q_{A_1}(\widetilde{V_y}))=\widetilde{V_y}$ and $V=q_{A_1}(\widetilde{V_y})$ is an open subset of $\mathbb I^{A_1}$ such that
$F\subset V\subset\bigcup_{i=1}^{i=k}W_i\subset U$. Then, as in the proof of Claim 3, we can show that the inclusion $V\hookrightarrow U$ generates
trivial homomorphisms $\pi_k(V)\to\pi_k(U)$. Hence, $F$ is $UV^n$.

\smallskip
\textit{Claim $5$. The union of any increasing sequence of admissible subsets of $B$ is also admissible.}

\smallskip
This claim follows directly from the definition of admissible sets.

Now we can complete the proof of Theorem 3.1. According to Claim 2 and Claim 5, the set $B$ is covered by a family $\mathcal S$ of countable sets such that $\mathcal S$
is stable with respect to countable unions. Then, by Claim 3, each $X_A$, $A\in\mathcal S$, is a metric $ALC^n$-compactum . Hence, Proposition 2.7 yields that all spaces
$X_A$, $A\in\mathcal S$, are $LC^n$. Moreover, the projections $p^{A_1}_{A_2}$ are $UV^n$-maps for any $A_1,A_2\in\mathcal S$ with $A_2\subset A_1$. Because the set $B$ is admissible, it follows from Claim 4 that the limit projections $p_A\colon X\to X_A$, $A\in\mathcal S$, are also $UV^n$-maps. Therefore, $X$ is limit space of the $\sigma$-complete inverse system $\{X_A, p^{A_1}_{A_2},A,A_1,A_2\in\mathcal S\}$.
\end{proof}

\begin{cor}
Any $LC^n$-compactum $X$ is an $ALC^n$-space.
\end{cor}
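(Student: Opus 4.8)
The plan is to reduce the statement to the sufficiency direction of Theorem~\ref{alc}, which needs only the \emph{bonding} projections of the system to be $UV^n$. Thus it suffices to show that an arbitrary compact $LC^n$-space $X$ is the limit space of a $\sigma$-complete inverse system of compact metrizable $LC^n$-spaces with $UV^n$ bonding projections; Theorem~\ref{alc} then yields at once that $X$ is $ALC^n$. So the real content of the corollary is this spectral decomposition.

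To obtain it I would argue in the spirit of the second half of the proof of Theorem~\ref{alc}. Embed $X$ in a Tychonoff cube $\mathbb I^B$ with $|B|=w(X)$, and for countable $A\subset B$ keep the notation $X_A=q_A(X)$, $p^{A_1}_{A_2}=q^{A_1}_{A_2}|X_{A_1}$, $p_A=q_A|X$. Running an admissibility construction as in Claims~2 and~5 of that proof --- but this time recording, for each point and each basic neighbourhood, the countably many functionally open sets and fillings $\mathbb B^{k+1}\to X$, $k\leq n$, that witness the $LC^n$-property of $X$ itself, each of them supported on a countable set of coordinates, and closing up under $U\mapsto s(U)$ --- one produces a $\sigma$-complete family $\mathcal S$ of countable subsets of $B$, cofinal in all countable subsets, with $X$ the limit space of $\{X_A,p^{A_1}_{A_2}:A,A_1,A_2\in\mathcal S\}$. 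It then remains to check that each $X_A$, $A\in\mathcal S$, is $LC^n$ and that each bonding projection $p^{A_1}_{A_2}$ (and each limit projection $p_A$) is $UV^n$, arguing as in Claims~3 and~4, where the softness of the cube projections $q_A$ is the device that transports spheres and discs between $\mathbb I^B$ and $\mathbb I^A$.

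I expect the main obstacle to be exactly the verification that the metrizable factors $X_A$ are $LC^n$. In the proof of Theorem~\ref{alc} this is handled by showing $X_A$ is $ALC^n$ and then invoking the metrizable equivalence of $ALC^n$ and $LC^n$; but that route presupposes $X$ is $ALC^n$, which is what one is proving, so here a direct argument is needed --- a spectral-reflection statement to the effect that the $LC^n$-data of $X$ descend to a cofinal family of countable levels and that the coordinate fibres $q_A^{-1}(x)\cap X$ become $UV^n$ in $X$ for cofinal $A$. (Alternatively, one may simply quote the known spectral decomposition of a compact $LC^n$-space into metrizable $LC^n$-factors with $UV^n$ bonding maps and then apply Theorem~\ref{alc}.) Once the decomposition is established, nothing further is required.
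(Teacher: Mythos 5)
Your reduction is the right one: the ``if'' direction of Theorem~\ref{alc} only uses the $UV^n$-property of the bonding maps, so it suffices to exhibit $X$ as the limit of a $\sigma$-complete system of metric $LC^n$-compacta with $UV^n$ bonding projections. But the proposal stops exactly at the point where the work begins. Your first suggestion --- an admissibility construction that ``records the fillings $\mathbb B^{k+1}\to X$ witnessing the $LC^n$-property'' --- cannot be carried out as stated: for a given $V$ there are uncountably many maps $\mathbb S^k\to V$ to fill, and, more fundamentally, a sphere mapped into $X_A$ does not lift to $X$ (the restriction $p_A=q_A|X$ is not soft; only the ambient cube projection $q_A$ is). That is precisely why Claim~3 in the proof of Theorem~\ref{alc} only delivers the $ALC^n$-property of $X_A$ (it works with functionally open extensions in $\mathbb I^A$, where softness of $q_A$ applies) and why that route presupposes $X$ is $ALC^n$ --- the circularity you yourself point out. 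Your fallback, ``quote the known spectral decomposition of a compact $LC^n$-space into metrizable $LC^n$-factors with $UV^n$ bonding maps,'' is not a citation of an off-the-shelf result here; it is the statement you are supposed to prove.

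The paper closes this gap with a device absent from your proposal: the Bogatyi--Smirnov factorization theorem \cite[Theorem 3]{bs}. Given a countable $A_0\subset B$, it factors $p_{A_0}$ as $h_1\circ g_1$ through a metric compactum $Y_1$ so that the fibres of $g_1\colon X\to Y_1$ are $UV^n$-sets in $X$; Dugundji's modified Vietoris theorem \cite[Theorems 5.3, 5.4]{du} then shows that $Y_1$ is $LC^n$ and, after iterating the construction to get $Y_1\leftarrow Y_2\leftarrow\cdots$, that the induced bonding maps $s^{k+1}_k$ are $UV^n$. Setting $A=\bigcup_k A_k$, one gets $X_A=\varprojlim Y_k$, which is $ALC^n$ by Theorem~\ref{alc} and hence $LC^n$ by Theorem~2.7, with $p_A^{-1}(y)=\bigcap_k g_k^{-1}(y_k)$ a $UV^n$-set in $X$. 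This produces the cofinal $\sigma$-closed family $\mathcal A$ of countable coordinate sets needed to apply Theorem~\ref{alc} to $X$. Without this factorization step (or an equivalent substitute) your argument does not go through.
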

\begin{proof}
We embed $X$ in some $\mathbb I^B$ and let $A_0\subset B$ be a countable set. According to the factorization theorem of Bogatyi-Smirnov \cite[Theorem 3]{bs}, there is a metric compactum $Y_1$ and maps $g_1\colon X\to Y_1$, $h_1\colon Y_1\to X_{A_0}$ such that $p_{A_0}=h_1\circ g_1$ and all fibers of $g_1$ are $UV^n$-sets in $X$. Then, by \cite[Theorem 5.4]{du}, $Y_1$ is $LC^n$. Since $g_1$ depends on countably many coordinates, there is a countable set $A_1\subset B$ containing $A_0$ and a map $f_1\colon X_{A_1}\to Y_1$ such that  $f_1\circ p_{A_1}=g_1$. In this way we construct countable sets $A_k\subset A_{k+1}\subset B$ and $LC^n$ metric compacta $Y_k$ together with maps $g_k\colon X\to Y_k$, $f_k\colon \colon X_{A_k}\to Y_k$ and $h_k\colon Y_k\to X_{A_{k-1}}$ such that $g_k=f_k\circ p_{A_k}$,  $p_{A_{k-1}}=h_k\circ g_k$ and the fibers of each $g_k$ are $UV^n$-sets in $X$.
Let $A$ be the union of all $A_k$. Then $X_A$ is the limit space of the inverse sequence $\mathcal S=\{Y_k, s^{k+1}_k=f_k\circ h_{k+1}\}$. According to \cite[Theorem 5.3]{du}, for all open sets $U\subset Y_k$ the group $\pi_m(U)$ is isomorphic to $\pi_m(g_k^{-1}(U))$, $m=0,1,..,n$. This property of the maps $g_k$ implies that any $s^{k+1}_k\colon Y_{k+1}\to Y_k$ is an $UV^n$-map. Hence, by Theorem \ref{alc}, $X_A$ is an $ALC^n$-compactum (as the limit of an inverse sequence of metric $LC^n$-compacta and bounding $UV^n$-maps). Finally, by Theorem 2.7, $X_A$ is also an $LC^n$-space. Moreover, for any $y\in X_A$ we have
$p_A^{-1}(y)=\bigcap_{k\geq 1}g_k^{-1}(y_k)$, where $y_k=s_k(y)$ with $s_k\colon X_A\to Y_k$ being the projections of $\mathcal S$. Because all $g_k^{-1}(y_k)$ are $UV^n$-sets in $X$, so is the set $p_A^{-1}(y)$.
Therefore, every countable subset $A_0$ of $B$ is contained in an element of the family $\mathcal A$ consisting of all countable sets $A\subset B$ such that $X_A$ is $LC^n$ and the fibers of the map $p_A$ are $UV^n$-sets in $X$. It is easily seen that the union of an increasing sequence of elements of $\mathcal A$ is again from $\mathcal A$, and that $p^C_A\colon X_C\to X_A$ is an $UV^n$-map for all $A,C\in\mathcal A$ with $A\subset C$. So, the inverse system $\{X_A, p^C_A, A,C\in\mathcal A\}$ is $\sigma$-complete and consists of metric $LC^n$-compacta and $UV^n$-bounding maps. Then by Theorem \ref{alc}, $X$ is $ALC^n$ (observe that the proof of the "if" part of Theorem \ref{alc} does not need the assumption that all limit projections are $UV^n$-maps).
\end{proof}

Theorem \ref{alc} shows that the class of $ALC^n$-compacta is adequate to the class of $UV^n$-maps. Next theorem provides another classes of compacta adequate to all continuous maps.

\begin{thm}\label{uv}
A compactum $X$ is a cell-like $($resp., $UV^n$$)$ space if and only if $X$ is the limit space of a $\sigma$-complete inverse system consisting of
cell-like $($resp., $UV^n$$)$ metric compacta.
\end{thm}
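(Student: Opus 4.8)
The plan is to prove both equivalences in Theorem \ref{uv} by the same kind of spectral argument used in Theorem \ref{alc}, only substantially easier because now the adequate class of maps is \emph{all} continuous maps, so no fibre-wise $UV^n$ condition on the bonding projections needs to be enforced. I will treat the cell-like case in detail and indicate the changes for the $UV^n$ case; both run in parallel. For the "if" direction, suppose $X=\varprojlim\{X_\alpha,p^\beta_\alpha\}$ is the limit of a $\sigma$-complete inverse system of cell-like metric compacta. Embed $X$ as a closed subset of an $ANR$ $R$ (e.g. a Tychonoff cube), and let $U$ be any open neighborhood of $X$ in $R$. I need to produce a smaller neighborhood $V$ of $X$ in which $X$ contracts inside $U$. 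The key point is that each $X_\alpha$, being a cell-like metric compactum, is $UV^\infty$ in any metric $ANR$ containing it, and this can be pushed to the limit: using Shchepin's spectral theorem one arranges the system so that the bonding and limit projections are restrictions of coordinate projections of the cube, and then the softness of the coordinate projections together with the $UV^\infty$-property of the metric factors gives that $X$ is $UV^\infty$ (hence cell-like) in $R$. Concretely, a map $f\colon K\to V$ of a finite polyhedron is pushed down to a factor $X_\alpha$ where it becomes null-homotopic in a neighborhood, and the homotopy is lifted back using softness; taking $K$ to be successively higher skeleta and passing to the limit yields the contraction of $X$ itself.

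For the "only if" direction, suppose $X$ is cell-like and embed $X$ in a Tychonoff cube $\mathbb I^B$. As in the proof of Theorem \ref{alc}, I introduce for each open neighborhood $U$ of $X$ in $\mathbb I^B$ (taken from a suitable functionally-open base) a "witnessing" smaller functionally-open neighborhood $\widetilde V$ in which $X$ contracts (for the $UV^n$ case, in which $k$-spheres are null-homotopic for $k\le n$) inside $U$, together with its countable support $s(\widetilde V)\subset B$. One then defines a notion of \emph{admissible} $A\subset B$ exactly analogous to the one in Theorem \ref{alc}: $A$ is admissible if it contains the supports of all the witnessing neighborhoods built from basic open sets over $A$. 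The three supporting claims carry over essentially verbatim: (Claim analogous to 2) every subset of $B$ is contained in an admissible set of the same infinite cardinality; (Claim analogous to 3) for admissible $A$ the factor $X_A=q_A(X)$ is a cell-like (resp.\ $UV^n$) metric compactum; (Claim analogous to 5) an increasing union of admissible sets is admissible. The proof of the analogue of Claim 3 is again the soft-lifting argument: a map of a sphere (or polyhedron) into the witnessing neighborhood in $\mathbb I^A$ lifts through $q_A$ to a map into the corresponding neighborhood downstairs, extends over the ball there, and projects back down. Assembling these, $B$ is covered by a family $\mathcal S$ of countable admissible sets closed under countable unions, and $X=\varprojlim\{X_A,p^{A_1}_{A_2},A\in\mathcal S\}$ is the required $\sigma$-complete inverse system of cell-like (resp.\ $UV^n$) metric compacta. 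Note that, unlike Theorem \ref{alc}, one does \emph{not} need Claim 4 (no condition on the bonding maps) nor any control of the limit projections, which is precisely why the class of maps here is all continuous maps.

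The main obstacle is the verification that the metric factors $X_A$ inherit the cell-like (resp.\ $UV^n$) property — i.e.\ the analogue of Claim 3. The subtlety is the same as in Theorem \ref{alc}: the witnessing neighborhoods $\widetilde V$ chosen upstairs in $\mathbb I^B$ must be arranged to be functionally open and to have countable support lying inside $A$, so that they descend to genuine open neighborhoods $q_A(\widetilde V)$ of $X_A$ in $\mathbb I^A$; here one uses that $X$ (being a compactum) is Lindel\"of, so countably many such neighborhoods suffice, their union is functionally open, and its support is countable. Once the descent is set up, the contraction/extension argument is a routine application of softness of $q_A$ together with $q_A^{-1}(X_A)\supset X$ and $q_A^{-1}(q_A(\widetilde V))=\widetilde V$. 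For the "if" direction the only delicate point is the standard fact that a cell-like metric compactum is $UV^\infty$ in \emph{every} metric $ANR$ containing it (cf.\ \cite{sa}), which lets one work in each metric factor; after that, the limit-lifting is again pure softness. I expect no essentially new idea beyond those already deployed in Theorem \ref{alc}, so the proof will mostly consist of pointing to the earlier argument with the fibre conditions deleted.
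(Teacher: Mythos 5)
There is one genuine gap, in your ``if'' direction for the cell-like case. The concrete mechanism you describe --- push a map of a finite polyhedron $K$ into $V$ down to a metric factor, null-homotope it there, lift the null-homotopy back by softness, and then ``take $K$ to be successively higher skeleta and pass to the limit'' --- only establishes that every neighbourhood $U$ of $X$ contains a neighbourhood $V$ such that every map of a sphere (of arbitrary dimension) into $V$ is null-homotopic in $U$, i.e.\ that $X$ is $UV^k$ for every $k$. That is strictly weaker than cell-likeness: for infinite-dimensional compacta the shape-theoretic Whitehead theorem fails, and the inverse limit of iterated Adams maps between suspensions of a Moore space is $UV^k$ for all $k$ (the bonding maps are eventually zero on each homotopy group) yet has nontrivial shape (detected by \v{C}ech $K$-theory), hence is not cell-like. ``Passing to the limit over skeleta'' is exactly the step that cannot be justified. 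The repair --- and what the paper actually does --- is to lift a contraction of an \emph{entire neighbourhood} in one step: choose $\alpha$ and $U_\alpha$ open in $\mathbb I^{B_\alpha}$ with $q_\alpha^{-1}(U_\alpha)\subset U$; since $X_\alpha$ is a cell-like metric compactum there is a closed neighbourhood $V_\alpha$ of $X_\alpha$ contractible in $U_\alpha$; apply softness of $q_\alpha$ to the homotopy $q_\alpha^{-1}(V_\alpha)\times[0,1]\to U_\alpha$ with the identity as initial lift (the terminal stage of the lift lands in a single fibre $q_\alpha^{-1}(\mathrm{pt})$, itself a Tychonoff cube and hence contractible, so one concatenates). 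Your closing remark that cell-like metric compacta have contractible neighbourhood bases suggests you had this in mind, but the argument as written delivers only the $UV^\omega$ property. For the $UV^n$ version of the ``if'' direction your skeletal argument is fine, since only spheres of dimension at most $n$ must be killed.

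Your ``only if'' direction is correct but takes a genuinely different route from the paper. You rerun the admissible-sets machinery of Theorem \ref{alc} (the analogues of Claims 2, 3 and 5, with Claim 4 and the control of the limit projections deleted), descending a contraction of a functionally open neighbourhood $\widetilde V\supset X$ with countable support $s(\widetilde V)\subset A$ to the factor $X_A$; note that to descend you should compose the contraction with a section of the soft map $q_A^{-1}(q_A(\widetilde V))\to q_A(\widetilde V)$ and then project by $q_A$, since the contraction itself need not factor through $q_A$. This works and is elementary and uniform with Theorem \ref{alc}. The paper instead disposes of this direction in one stroke via shape theory: cell-likeness means $Sh(X)=Sh(\mathrm{pt})$, and Corollary 8.4.8 of \cite{ch1} on shape isomorphisms between limits of $\sigma$-complete systems yields a cofinal closed set of indices $\alpha$ with $X_\alpha$ of trivial shape; the $UV^n$ case is handled identically via the $n$-shape of \cite{ch2}. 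Your version costs more writing but avoids importing shape and $n$-shape theory; the paper's is shorter but rests on those black boxes.
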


\begin{proof}
 Suppose $X$ is a cell-like compactum. Because this means that $X$ has a shape of a point, we can apply  Corollary 8.4.8 from \cite{ch1} stating that if $\varphi$ is a shape isomorphism between the limit spaces of two $\sigma$-complete inverse systems $\{X_\alpha, p^{\beta}_\alpha, \alpha\in A\}$ and  $\{Y_\alpha, q^{\beta}_\alpha, \alpha\in A\}$ of metric compacta, then the set of those $\alpha\in A$ for which there exist shape isomorphisms $\varphi_\alpha: X_\alpha\to Y_\alpha$ satisfying
$Sh(q_\alpha)\circ\varphi=\varphi_\alpha\circ Sh(p_\alpha)$ is cofinal and closed in $A$. So, according to this corollary, $X$ is the limit space of a $\sigma$-complete inverse system consisting of metric cell-like compacta. In case $X$ is an $UV^n$-compactum, it has an $n$-shape of a point (this notion was introduced by Chigogidze in \cite{ch2}), and the above arguments apply.

Suppose now that $X$ is the limit space of a $\sigma$-complete inverse system $\{X_\alpha, p^{\beta}_\alpha, \alpha\in A\}$ such that all $X_\alpha$ are metric cell-like compacta. As in the proof of Theorem \ref{alc}, we can embed $X$ in a Tychonoff cube $\mathbb I^B$, where $B$ is the union of countable sets $B_\alpha$, $\alpha\in A$, such that $B_\alpha\subset B_\beta$ for $\alpha<\beta$, $B_\gamma=\bigcup\{B_{\gamma(k)}:k=1,2,..\}$ for any chain
$\gamma(1)<\gamma(2)<..$ with $\gamma=\sup\{\gamma(k):k\geq 1\}$, and each $p^{\beta}_\alpha\colon X_\beta\to X_\alpha$ is the restriction of the projection  $q^{\beta}_\alpha\colon\mathbb I^{B_\beta}\to\mathbb I^{B_\alpha}$. Then $X_\alpha=q_\alpha(X)\subset\mathbb I^{B_\alpha}$ with  $q_\alpha$ being the projection from $\mathbb I^B$ onto $\mathbb I^{B_\alpha}$. If $U$ is a neighborhood of $X$ in $\mathbb I^B$, there is $\alpha$ and an open set $U_\alpha$ in
$\mathbb I^{B_\alpha}$ such that $q_\alpha^{-1}(U_\alpha)\subset U$. Since $X_\alpha$ is a cell-like space, there exists a closed neighborhood
$V_\alpha\subset\mathbb I^{B_\alpha}$ of $X_\alpha$ contractible in $U_\alpha$. Using that $q_\alpha$ is a soft map, we conclude that $q_\alpha^{-1}(V_\alpha)$ is contractible in $q_\alpha^{-1}(U_\alpha)$. Similarly, we can show that any limit space of a $\sigma$-complete inverse system of metric $UV^n$-compacta is also an $UV^n$-compactum.
\end{proof}


\begin{thebibliography}{999}

\bibitem{ba}
P.~Bacon, \textit{Extending a complete metric}, Amer. Math. Monthly  \textbf{75} (1968) 642–-643.

\bibitem{bs}
S.~Bogatyi and Ju.~Smirnov, \textit{Approximation by polyhedra and factorization theorems for ANR-bicompacta}. Fund. Math. \textbf{87}
(1975), no. 3, 195–205 (in Russian).

\bibitem{ch1}
A.~Chigogidze, \textit{Inverse spectra}, North-Holland Math. Library \textbf{53} (Elsevier Sci. B.V., Amsterdam, 1996)

\bibitem{ch2}
A.~Chigogidze, \textit{The theory of $n$-shapes}, Russian Math. Surveys \textbf{44} (1989), 145--174.

\bibitem{dr2}
A.~Dranishnikov, A private communication 1990.

\bibitem{dr1}
A.~Dranishnikov, \textit{Universal Menger compacta and universal mappings},
Math. USSR Sb. \textbf{57} (1987), no. 1, 131--149.

\bibitem{dr}
A.~Dranishnikov, \textit{Absolute ekstensors in dimension $n$ and $n$-soft maps increasing dimension},
Uspekhi Mat. Nauk \textbf{39} (1984), no. 5(239), 55--93 (in Russian).

\bibitem{du}
J.~Dugundji, \textit{Modified Vietoris theorem for homotopy}, Fund. Math. \textbf{66} (1970), 223--235.

\bibitem{dm}
J.~Dugundji and E.~Michael, \textit{On local and uniformly local topological properties}, Proc. Amer. Math. Soc. \textbf{7} (1956), 304--307.

\bibitem{gu}
V.~Gutev, \textit{Selections for quasi-l.sc. mappings with uniformly equi-$LC^n$ range}, Set-Valued Anal. \textbf{1} (1993), no. 4, 319--328.

\bibitem{m}
S.~Marde\v{s}i\'{c}, \textit{On covering dimension and inverse limits of compact spaces}, Illinois Math. Joourn. \textbf{4} (1960), no. 2, 278--291.

\bibitem{nhu}
N.~To~Nhu, \textit{Investigating the $ANR$-property of metric spaces}, Fund. Math. \textbf{124} (1984), 243--254; Corrections:
Fund. Math. \textbf{141} (1992), 297.

\bibitem{hu}
S.~T.~Hu, \textit{Theory of retracts}, Wayne State Univ. Press, Detroit, 1965.

\bibitem{kv}
A.~Karassev and V.~Valov, \textit{Extension dimension and quasi-finite $CW$-complexes}, Topology Appl. \textbf{153} (2006), 3241--3254.

\bibitem{pa}
B.~Pasynkov, \textit{Monotonicity of dimension and open mappings that raise dimension}, Trudy Mat. Inst. Steklova \textbf{247}
(2004), 202--213 (in Russian).

\bibitem{pa1}
B.~Pasynkov, \textit{On universal bicompacta of given weight and dimension}, Dokl. Akad. Nauk SSSR \textbf{154} (1964),
no. 5, 1042--1043 (in Russian).

\bibitem{sa}
K.~Sakai, \textit{Geometric aspects of general topology}, Springer Monographs in Mathematics. Springer, Tokyo, 2013.

\bibitem{s76}
E.~Shchepin, \textit{Topology of limit spaces with uncountable
inverse spectra}, Uspekhi Mat. Nauk \textbf{31} (1976), no. 5(191),
191--226 (in Russian).


\end{thebibliography}
\end{document}